\newcommand{\vertiii}[1]{{\left\vert\kern-0.35ex\left\vert\kern-0.35ex\left\vert #1 \right\vert\kern-0.35ex\right\vert\kern-0.35ex\right\vert}}
\theoremstyle{plain}
\newtheorem{theorem}{Theorem}
\newtheorem{proposition}{Proposition}
\newtheorem{lemma}{Lemma}
\newtheorem{ex}{Example}
\newtheorem{cor}{Corollary}
\newtheorem{definition}{Definition}
\theoremstyle{remark}
\newtheorem{remark}{Remark}
\newcommand{\diam}{\mathrm{diam}}
\newcommand{\co}{\mathrm{c}_0} 
\DeclareRobustCommand{\VAN}[2]{#2}
\title{On $L$-$\omega$-nonexpansive maps}
\author{C. S. Barroso}
\address{Department of Mathematics, Federal University of Cear\'a, Fortaleza, CE, Brazil}
\email{cleonbar@mat.ufc.br}
\author{J. V. Bedoya}
\address{Instytut Matematyki, Uniwersytet Marii Curie-Sk{\l}odowskiej, Pl. Marii Curie-Sk{\l}odowskiej 1, Lublin, Pl.}
\email{jeimer.villadabedoya@mail.umcs.pl}
\author{C. S. R. da Silva}
\address{Department of Mathematics, Federal University of Cear\'a, Fortaleza, CE, Brazil.}
\email{csergiomat@gmail.com} 
\subjclass[2010]{(Primary) 47H10, 46B03; (Secondary) 46B20, 46B45}
\keywords{Modulus of continuity; Fixed Point Property; $L$-$\omega$-Lipschitz maps}
\thanks{The author was supported in part by project MTM2016-76958-C2-1-P and project IB16056.}
\begin{document}

\begin{abstract}
We consider $L$-$\omega$-nonexpansive maps $T\colon K\to K$ on a convex subset $K$ of a Banach space $X$, i.e., maps in which $\omega_T(\delta)\leq L\delta +\omega(\delta)$ with $L\in [0,1]$, $\omega$ being a modulus of continuity and $\omega_T$ is the minimal modulus of continuity of $T$. Both AFPP and FPP are studied. For moduli $\omega$ with $\omega'(0)=\infty$, we show that if $X$ contains an isomorphic copy of $\co$ then it fails the FPP for $0$-$\omega$-nonexpansive maps with minimal displacement zero. In the affirmative direction, we prove for certain class of moduli $\omega$ that $0$-$\omega$-nonexpansive maps are constant on certain domains. Also, when $\omega'(0)\leq 1-L$ we show that AFPP works and FPP also works under a monotonicity condition on $\omega$. Further related results and examples are given. 
\end{abstract}

\maketitle

\section{Introduction}
A fundamental task in the metric theoretical study of the fixed point property (FPP), as well as the approximate fixed point property (AFPP), is to obtain sufficient conditions to ensure the existence of fixed points, or approximate fixed points, of mappings in which the distances of two images are controlled by a linear growth. To address these properties, many forms of linear distance growth have been considered (cf. \cite{Ak-K,Als,Clarke, Go2002, GoKi, Ki1, LS, Lin1,Lin2}), including (e.g.) Lipschitzness, asymptotic nonexpansiveness and nonexpansive distance growths. Nonexpansive maps correspond to $1$-Lipschitz maps, while asymptotically nonexpansive maps are those in which the Lipschitz constants of their iterates converge to $1$. In the study of FPP and AFPP, these types of growth play a key role. Early results on FPP date back to those pioneering works of Browder \cite{Bro}, G\"ohde \cite{Goh} and Kirk \cite{Ki1}. Regarding the AFPP, interesting quantitative aspects were initially outlined by Goebel \cite{Go2002}. The books \cite{Ak-K, GoKi, KiSims} contain a wealth of information on these properties. In this paper we consider the following type of distance growth.

\vskip .1cm 

\begin{definition}
A mapping $T$ on a metric space $(M,\rho)$ is called $L$-$\omega$-nonexpansive if 
\begin{equation}\label{eqn:1.1}
\rho(Tx, Ty)\leq L\rho(x, y)+\omega(\rho(x,y)),
\end{equation}
for all $x, y\in M$, where $L\in [0,1]$ and $\omega\colon \mathbb{R}_+\to\mathbb{R}_+$ is a modulus of continuity. When $L=0$ we will refer to $T$ as $\omega$-nonexpansive, and as $L$-$\omega$-Lipschitz if (\ref{eqn:1.1}) is satisfied with $L>1$.
\end{definition}

\vskip .1cm 

\begin{definition} The minimal displacement of $T$ on $(M,\rho)$ is the number $\mathrm{d}(T,M)$ given by 
\[
\mathrm{d}(T,M)=\inf_{x\in M}\rho(x, Tx).
\] 
The space $M$ is said to have the FPP (resp. AFPP) for a class $\mathcal{F}$ of self-maps of $M$ if, every mapping $T\in\mathcal{F}$ has a fixed point (resp. satisfies $\mathrm{d}(T,M)=0$).
\end{definition}

\vskip .1cm 

\noindent The main goal of this paper is to investigate sufficient conditions on moduli $\omega$ under which AFPP and FPP hold for $L$-$\omega$-nonexpansive maps. In our study, the results obtained consider moduli of continuity $\omega$ that are related to the following conditions.

\vskip .1cm 

\begin{itemize} \setlength\itemsep{1.3mm}
\item[($\omega_1$)] $\omega$ is nondecreasing on $[1,\infty)$.
\item[($\omega_2$)] $\lim_{\delta\to 0}\dfrac{\omega(\delta)}{\delta}=\infty$.
\item[($\omega_3$)] $\delta \to \dfrac{\omega(\delta)}{\delta}$ is nonincreasing in $(0,1]$. 
\item[($\omega_4$)] $L\in [0,1]$ and $\limsup_{\delta\to 0} \dfrac{ \omega(\delta)}{\delta}\leq 1 - L$. 
\end{itemize}

\vskip .1cm 

\subsection{Description of main results} Throughout $X$ will denote a real infinite-dimensional Banach space with norm $\|\cdot\|$. The approach employed here to obtain both negative and positive results on the AFPP and FPP for $L$-$\omega$-nonexpansive maps, combines structural and geometric properties with the idea of building modulus of continuity with prescribed conditions. We obtain several interesting results. The first is a fixed-point free result (Theorem \ref{thm:M1}) which for a large family $\mathcal{H}$ of moduli of continuity $\omega$ (cf. Definition \ref{dfn:2.2}) shows that some closed and bounded convex subset $K$ of $X$ fails the FPP for $1+\varepsilon$-$\omega$-Lipschitz maps with null minimal displacements, no matter what $\varepsilon \in (0,1)$.


\vskip .1cm 


The second one (Proposition \ref{thm:M2}) is a result in the spirit of Lin-Sternfeld's characterization of AFPP for Lipschitz maps \cite{LS}. In particular, it shows that there exists a closed convex subset $K\subset B_X$ such that for every $L\in (0,1]$ and every modulus of continuity $\omega$ satisfying ($\omega_2$), $K$ fails the AFPP for $L$-$\omega$-nonexpansive maps.


\vskip .1cm 


In light of all this, our next results aim to describe what type of moduli of continuity are favorable to AFPP and FPP for $\omega$-nonexpansive maps. We first observe (see Proposition \ref{prop:5.5}) that if a convex set $C$ has the hereditary under boundedness AFPP for $\omega$-nonexpansive maps with $\omega(1)=0$, then the whole $C$ enjoys the same property. Next we prove (cf. Theorem \ref{thm:5.6}) that in every normed space $X$ with $\diam X\geq 2$, a mapping $T\colon B_X\to B_X$ is $\omega$-nonexpansive with $\omega(1)=0$ iff it is constant. The same property is also valid for the positive part $K$ of $B_X$ when $X$ is either $(\ell_1, \|\cdot\|_1)$ or $(\mathrm{c}_0, \|\cdot\|_\infty)$ (cf. Proposition \ref{prop:5.7}).


\vskip .1cm 


Still in the negative direction, we also show that hyperconvexity is not enough to ensure the FPP for $L$-log-nonexpansive maps (cf. Proposition \ref{prop:5.9}). Further, it is proved (in Theorem \ref{thm:5.10}) that if $X$ contains a subspace isomorphic to $\mathrm{c}_0$ then for every modulus of continuity $\omega$ which is subadditive on $[0,1]$, satisfies ($\omega_2$) and which is nondecreasing near $0$, $B_X$ contains a closed convex subset which fails the FPP for $\omega$-nonexpansive maps with null minimal displacement. 


In the positive direction we obtain two interesting results for $L$-$\omega$ nonexpansive maps with $\omega$ satisfying ($\omega_4$). The first one (Lemma \ref{lem:5.13}-(i)) is an immediate consequence of a fixed point result of \cite{Clarke}, and the second (Lemma \ref{lem:5.13}-(ii)) is an approximate fixed point result. It is also noted (cf. Lemma \ref{lem:5.13}-(iii)) that $L$-$\omega$-nonexpansive maps satisfying both ($\omega_3$)-($\omega_4$) are nonexpansive. 


\section{$L$-$\omega$-Lipschitz maps}\label{sec:2}
In this section we introduce some notation and terminology to be used in later sections. Let us begin with the definition of modulus of continuity adopted in this paper. By a {\it modulus of continuity} we simply mean a continuous nonnegative function $\omega$ defined on $\mathbb{R}_+$ or on $[0,\ell]$, $0< \ell<\infty$, such that $\omega(0)=0$. Here $\mathbb{R}_+:=[0,\infty)$. The modulus $\omega$ is said to be nondecreasing near $0$ if there is $\delta_0\in (0,1)$ so that $\omega$ is nondecreasing on $[0,\delta_0]$. 


\vskip .1cm 


\begin{definition}\label{dfn:2.1} Let $(M, \rho)$ be a metric space. A modulus of continuity $\omega$ is called a modulus of continuity for a mapping $T\colon M\to M$ if $\omega_T(\delta)\leq \omega(\delta)$ for all $\delta\in [0,\diam M]$, where $\omega_T$ stands for the minimal modulus of continuity of $T$, i.e.,
\[
\omega_T(\delta):=\sup\big\{ \rho(Tx, Ty)\colon x, y\in M,\, \rho(x,y)\leq \delta\big\}. 
\]
\end{definition}


\vskip .1cm 


In mathematical analysis, the modulus of continuity plays a fundamental role in the task of quantifying the uniform continuity of a mapping. Indeed, the problem of determining whether a module is the modulus of continuity for some function is a classical one (see \cite{Leb, Nik}). Here we are concerned with maps whose minimal modulus of continuity can be dominated by a prescribed class of functions. In particular, the following family of moduli will play an important role. 


\vskip .1cm 


\begin{definition}\label{dfn:2.2} We define $\mathcal{H}$ to be the family of all moduli of continuity $\omega$ for which there exist a nonconstant nonnegative real function $f$ on $[0,1]$ such that $\omega_f(\delta)\leq \omega(\delta)$ for all $\delta \in [0,1]$. 
\end{definition} 

\vskip .1cm 


As for example, if $\omega$ is nondecreasing and satisfies condition ($\omega_2$) then $\omega \in \mathcal{H}$ (cf. \cite[Theorem 7]{Breneis}). Also, it is easy to verify that $\omega(\delta)=\delta(L+ |\log\delta|)$ belongs to $\mathcal{H}$ for some $L\in (0,1)$ (cf. Example \ref{ex:4.1}). Furthermore, we note that $\mathcal{H}$ also includes the class of nondecreasing functions that are either concave (cf. Proposition \ref{Prop:2.6}) or satisfy $\omega'(0)<\infty$ (cf. Lemma \ref{lem:3.3}).


\vskip .1cm 


\begin{definition}
Let $(M,\rho)$ be a metric space. For a given number $L>1$ and a given modulus of continuity $\omega$, we say that $T\colon M\to M$ is a $L$-$\omega$-Lipschitz mapping provided
\begin{equation}\label{eqn:dfn}
\rho(Tx, Ty)\leq  L\rho(x,y)+\omega(\rho(x,y))\quad \text{for all }\, x, y\in M.
\end{equation}
If (\ref{eqn:dfn}) holds for $L\in (0,1]$, then we say that $T$ is $L$-$\omega$-nonexpansive. In the case in which (\ref{eqn:dfn}) is satisfied for $L=0$, $T$ will simply be called as $\omega$-nonexpansive. 
\end{definition} 


\vskip .1cm 


Clearly, every nonexpansive mapping is $1$-$\omega$-nonexpansive and $L$-Lipschitz maps are $L$-$\omega$-Lipschitz. It is known that concave moduli of continuity satisfy ($\omega_3$). The class of moduli satisfying both ($\omega_1$)-($\omega_3$) includes $\omega(\delta)=\alpha\delta |\log \delta|$ with $\alpha\in\mathbb{R}_+$. So, when (\ref{eqn:dfn}) is satisfied with $\omega(\delta)= \delta |\log \delta|$ we will refer to $T$  as $L$-log-Lipschitz if $L>1$ (resp. $L$-log-nonexpansive if $L\in (0,1]$, or simply log-nonexpansive if $L=0$). Here $\log$ means the logarithm with base $10$, a choice made purely for reasons of simplicity. 


\vskip .1cm 


\begin{remark} Log-Lipschitz maps are present in many contexts. They play an important role in the study of PDE's describing models from fluid mechanics (cf. \cite[Chapter 7]{BCD} and \cite{CGK}). They are also important for solving uniqueness issues in abstract ODEs (cf. \cite{ArgLaks}) under the influence of Osgood's modulus of continuity, which are nondecreasing functions $\omega\colon \mathbb{R}_+\to \mathbb{R}_+$ such that $\omega(0)=0$ and 
\[
\int_0^1 \omega^{-1}(r)dr=\infty. 
\]
Notice that the function $\omega(r)= r(L+ |\log r|)$ is an Osgood modulus of continuity. 
\end{remark}


\section{Auxiliary results} 

This section collects four auxiliary results. The first three illustrate sufficient conditions for a modulus to pointwisely dominate the minimal modulus of continuity of some function. The first two and the last one will be used later. 

\begin{proposition}\label{prop:2.5} Let $\omega\colon [0,\ell]\to\mathbb{R}_+$ be a subadditive modulus of continuity which is non-decreasing on $[0,\eta]$, $\eta\leq \ell$. Then there exists a nonnegative function $f$ on $[ 0,\ell]$ such that $f(0)=0$ and $\omega_f \leq \omega$ on $[0,\eta]$. If, in addition, $\omega$ satisfies ($\omega_2$) then $\lim_{\delta\to 0} \delta^{-1} f(\delta)=\infty$. 
\end{proposition}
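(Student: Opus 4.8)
The plan is to take the modulus $\omega$ itself as the function $f$, suitably cut off at $\eta$. Concretely, I would set
\[
f(\delta) = \omega\bigl(\min\{\delta, \eta\}\bigr), \qquad \delta \in [0, \ell],
\]
so that $f = \omega$ on $[0,\eta]$ and $f$ equals the constant $\omega(\eta)$ on $[\eta, \ell]$. This $f$ is nonnegative with $f(0) = \omega(0) = 0$, and it is nondecreasing on all of $[0,\ell]$ since $\omega$ is nondecreasing on $[0,\eta]$ and $f$ is constant beyond $\eta$; in particular it is nonconstant whenever $\omega$ does not vanish identically on $[0,\eta]$, which is exactly what is needed for the intended application to the family $\mathcal{H}$.

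The heart of the argument is the elementary inequality that comes from combining subadditivity with monotonicity: for $0 \leq a \leq b \leq \eta$ one has $\omega(b) = \omega\bigl((b-a)+a\bigr) \leq \omega(b-a) + \omega(a)$, whence
\[
\omega(b) - \omega(a) \leq \omega(b - a).
\]
I would then fix $\delta \in [0, \eta]$ and estimate $|f(x) - f(y)|$ for arbitrary $x, y \in [0,\ell]$ with $|x - y| \leq \delta$, say $x \leq y$, by splitting into three cases according to the positions of $x$ and $y$ relative to $\eta$. When $x, y \in [0,\eta]$, the displayed inequality with $a = x$, $b = y$ gives $f(y) - f(x) = \omega(y) - \omega(x) \leq \omega(y-x) \leq \omega(\delta)$, the last step by monotonicity since $y - x \leq \delta \leq \eta$. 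When $x, y \in [\eta, \ell]$ the difference is zero. The only case requiring care is the mixed one, $x \leq \eta \leq y$: here $f(y) - f(x) = \omega(\eta) - \omega(x)$, and since $\eta - x \leq y - x \leq \delta$ the same inequality (now with $a = x$, $b = \eta$) yields $\omega(\eta) - \omega(x) \leq \omega(\eta - x) \leq \omega(\delta)$. Taking the supremum over all admissible $x, y$ gives $\omega_f(\delta) \leq \omega(\delta)$ throughout $[0,\eta]$, as claimed.

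For the final assertion, I would simply observe that $f$ agrees with $\omega$ on $(0,\eta]$, so that $\delta^{-1} f(\delta) = \delta^{-1}\omega(\delta)$ there; hypothesis ($\omega_2$) then forces $\lim_{\delta \to 0} \delta^{-1} f(\delta) = \infty$ at once.

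I do not anticipate a genuine obstacle: the construction is essentially forced, since a subadditive and increasing $\omega$ is its own natural candidate. The only subtle point is checking that extending $f$ to be constant past $\eta$ does not inflate its modulus on $[0,\eta]$, and this is precisely the mixed case above, which relies crucially on having \emph{both} subadditivity and monotonicity available simultaneously.
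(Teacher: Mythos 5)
Your proof is correct, and it reaches the same $f$ as the paper by a more self-contained route. The core mechanism is identical: on $[0,\eta]$ you take $f=\omega$ and control its oscillation via the inequality $\omega(b)-\omega(a)\leq\omega(b-a)$, which is exactly how the paper bounds $\omega_{f_0}$ for $f_0=\omega|_{[0,\eta]}$. Where you diverge is in passing from $[0,\eta]$ to $[0,\ell]$: the paper introduces the auxiliary modulus $\tilde\omega$ (equal to $\omega$ on $[0,\eta]$ and to the constant $\omega(\eta)$ beyond) and then invokes McShane's extension theorem to produce \emph{some} extension $f$ of $f_0$ whose modulus of continuity is dominated by $\tilde\omega$, whereas you simply take that truncated function itself as the extension --- your $f(\delta)=\omega(\min\{\delta,\eta\})$ is precisely the paper's $\tilde\omega$ restricted to $[0,\ell]$ --- and check the modulus bound by hand, the mixed case $x\leq\eta\leq y$ being the only step requiring both subadditivity and monotonicity at once. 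What your version buys is elementarity and explicitness: no appeal to an extension theorem, and nonnegativity and monotonicity of $f$ are manifest from the formula (the paper's McShane extension is not obviously nonnegative without a further truncation). What the paper's version buys is flexibility: McShane gives modulus control with respect to $\tilde\omega$ on all of $[0,\ell]$ for \emph{any} choice of target modulus, though for the proposition as stated only control on $[0,\eta]$ is needed, so nothing is lost in your approach. The final assertion under ($\omega_2$) is handled the same way in both proofs, since in each case $f$ agrees with $\omega$ near $0$.
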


\begin{proof} Let us extend $\omega|_{[0,\eta]}$ to $\mathbb{R}_+$ as follows: define $\tilde{w}(t)= \omega(t)$ if $t\in [0,\eta]$ and $\tilde{w}(t)= \omega(\eta)$ if $t>\eta$. Clearly, $\tilde{w}$ is subadditive and nondecreasing on $\mathbb{R}_+$. Let us define $f_0\colon [0,\eta] \to\mathbb{R}_+$ by $f_0(t) = \omega(t)$.  Then for $h>0$ and $r + h, r\in [0,\eta]$, we have
\[
|f_0(r + h) - f_0(r)|= \omega(r + h) - \omega(r)\leq \omega(h)=\tilde{w}(h).
\]
Hence $\omega_{f_0}\leq \tilde{\omega}$. By McSchane's extension theorem \cite[Theorem 2]{McS} $f_0$ admits an extension $f$ to $[0,\ell]$ which preserves the modulus of continuity $\tilde{w}$. Since $\tilde{\omega}\equiv \omega$ on $[0,\eta]$, the result follows. 
\end{proof}


\vskip .1cm


\begin{proposition}\label{Prop:2.6} Let $\omega\colon [0,1]\to\mathbb{R}_{+}$ be a real function such that $\omega(0)=0$, $\omega'(t)\geq 0$ and $\omega''(t)\leq 0$ for $t\in [0,1]$. Then $\omega_\omega= \omega$. 
\end{proposition}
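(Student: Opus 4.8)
The plan is to unwind the definition of the minimal modulus of continuity and to exploit the two consequences of the hypotheses: since $\omega'\geq 0$, the function $\omega$ is nondecreasing on $[0,1]$, and since $\omega''\leq 0$, its derivative $\omega'$ is nonincreasing (equivalently, $\omega$ is concave). Recalling that $\diam[0,1]=1$, for $\delta\in[0,1]$ we have
\[
\omega_\omega(\delta)=\sup\big\{|\omega(x)-\omega(y)|\colon x,y\in[0,1],\ |x-y|\leq\delta\big\}.
\]
I would then prove the two inequalities $\omega_\omega(\delta)\geq\omega(\delta)$ and $\omega_\omega(\delta)\leq\omega(\delta)$ separately.

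For the lower bound I would simply test the supremum on the admissible pair $(x,y)=(\delta,0)$. Since $|\delta-0|=\delta\leq\delta$ and $\omega(0)=0$ with $\omega\geq 0$, this yields $|\omega(\delta)-\omega(0)|=\omega(\delta)$, whence $\omega_\omega(\delta)\geq\omega(\delta)$.

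The upper bound is the substantive part. Fix $x,y\in[0,1]$ with $h:=|x-y|\leq\delta$, and assume without loss of generality $x\geq y$, so that $x=y+h$. Monotonicity of $\omega$ gives $|\omega(x)-\omega(y)|=\omega(y+h)-\omega(y)$. I would then write this increment as an integral and use that $\omega'$ is nonincreasing:
\[
\omega(y+h)-\omega(y)=\int_0^h\omega'(y+s)\,ds\leq\int_0^h\omega'(s)\,ds=\omega(h)-\omega(0)=\omega(h).
\]
Since $h\leq\delta$ and $\omega$ is nondecreasing, $\omega(h)\leq\omega(\delta)$, so $|\omega(x)-\omega(y)|\leq\omega(\delta)$ for every admissible pair; taking the supremum gives $\omega_\omega(\delta)\leq\omega(\delta)$. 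Combining the two bounds yields $\omega_\omega=\omega$ on $[0,1]$.

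The only delicate point is the increment inequality $\omega(y+h)-\omega(y)\leq\omega(h)$, which is precisely the subadditivity forced by concavity together with $\omega(0)=0$; the integral comparison above (equivalently, the substitution $s\mapsto s+y$ combined with $\omega'(s+y)\leq\omega'(s)$) makes it transparent. Every other step is routine, so I do not anticipate a genuine obstacle.
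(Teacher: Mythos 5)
Your proof is correct. Both inequalities are sound: the lower bound from the admissible pair $(\delta,0)$ is immediate, and in the upper bound the comparison $\int_0^h\omega'(y+s)\,ds\leq\int_0^h\omega'(s)\,ds$ is justified because $\omega''\leq 0$ makes $\omega'$ nonincreasing (and, since $\omega''$ exists everywhere, $\omega'$ is continuous, so the fundamental theorem of calculus applies without fuss). The route differs from the paper's: the paper disposes of the proposition in two lines by observing that the hypotheses make $\omega$ nondecreasing and subadditive, and then citing a known lemma of Breneis (\cite[Lemma 5]{Breneis}) asserting that any nondecreasing subadditive modulus of continuity satisfies $\omega_\omega=\omega$. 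You instead reprove that lemma from scratch in the smooth setting, with the integral comparison playing the role of subadditivity (as you yourself note, the inequality $\omega(y+h)-\omega(y)\leq\omega(h)$ \emph{is} subadditivity, given $\omega(0)=0$ and monotonicity). What the paper's approach buys is brevity and the fact that the cited lemma holds under weaker hypotheses (no differentiability needed); what your approach buys is self-containedness, since the differentiability assumed in the statement makes the two-inequality argument completely elementary and avoids any external reference.
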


\begin{proof} By assumption $\omega$ is nondecreasing and subadditive. By \cite[Lemma 5]{Breneis} the result follows. 
\end{proof}


The following lemma complements a result of Breneis \cite[Theorem 7]{Breneis} proved only for moduli with derivative at $0$ infinite, i.e. $\omega'(0)=\infty$. 


\begin{lemma}\label{lem:3.3} Assume that $\omega\colon \mathbb{R}_+\to \mathbb{R}_+$ is a modulus of continuity which is nondecreasing on $[0,b]$ for some $b>1$ and satisfies
\[
M=\lim_{\delta\to 0}\frac{\omega(\delta)}{\delta}\in (0,+\infty).
\]
Then there exists a non-negative function $f\colon [0,1]\to [0, M]$ such that $f(0)=0$ and $\omega_f\leq \omega$. 
\end{lemma}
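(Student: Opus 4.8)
The plan is to produce $f$ in the simplest possible form, namely a linear function $f(x)=cx$, and to locate the largest admissible slope $c$. First I would record the reduction that makes this strategy work: since $\omega$ is nondecreasing on $[0,1]$ (because $b>1$), the requirement $\omega_f(\delta)\le\omega(\delta)$ for all $\delta\in[0,1]$ is equivalent to the pointwise Lipschitz-type bound $|f(x)-f(y)|\le\omega(|x-y|)$ for all $x,y\in[0,1]$. For $f(x)=cx$ one has $\omega_f(\delta)=c\delta$, so the whole problem collapses to choosing $c>0$ with $c\delta\le\omega(\delta)$ for every $\delta\in(0,1]$, together with $c\le M$ to respect the range constraint $f([0,1])\subseteq[0,M]$.

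Accordingly, I would set
\[
c:=\inf_{\delta\in(0,1]}\frac{\omega(\delta)}{\delta},
\]
and the two things to verify are that $c>0$ (so that $f$ is nonconstant and the bound is useful) and that $c\le M$. The inequality $c\le M$ is immediate from the hypothesis $\lim_{\delta\to0}\omega(\delta)/\delta=M$, since the infimum over $(0,1]$ cannot exceed the values of $\omega(\delta)/\delta$ taken at points $\delta$ arbitrarily close to $0$, and these converge to $M$.

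The heart of the argument, and the step I expect to require the most care, is the strict positivity $c>0$. I would prove it by first checking that $\omega$ is strictly positive on all of $(0,1]$: the limit $M>0$ forces $\omega(t)/t>M/2$, hence $\omega(t)>0$, for all sufficiently small $t$, and then monotonicity on $[0,b]\supseteq[0,1]$ propagates this positivity to the rest of $(0,1]$. Consequently the function $g(\delta)=\omega(\delta)/\delta$ is continuous and strictly positive on $(0,1]$ and extends continuously to $[0,1]$ by setting $g(0)=M>0$. Being a positive continuous function on the compact interval $[0,1]$, it attains a positive minimum; this minimum is exactly $c$, whence $c>0$.

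Finally I would assemble the conclusion. With $c\in(0,M]$ as above, the function $f(x)=cx$ is nonnegative on $[0,1]$, satisfies $f(0)=0$, maps $[0,1]$ into $[0,c]\subseteq[0,M]$, and by the choice of $c$ obeys $\omega_f(\delta)=c\delta\le\omega(\delta)$ for every $\delta\in[0,1]$; this is precisely $\omega_f\le\omega$. In particular $f$ is nonconstant, which is what is needed to place $\omega$ in the family $\mathcal{H}$. The only genuine difficulty is the positivity of $c$; everything else is a direct verification, and the finiteness of $M$ is exactly what permits this elementary linear choice, in contrast to the $\omega'(0)=\infty$ setting treated by Breneis.
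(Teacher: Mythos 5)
Your proof is correct, and it takes a genuinely different --- and much more elementary --- route than the paper. The paper proves Lemma~\ref{lem:3.3} by adapting Breneis's inductive construction: it fixes an auxiliary increasing modulus $\tilde{\omega}$ which is $(M-\varepsilon)$-Lipschitz on $[0,1]$ with $\omega_{\tilde{\omega}}=\tilde{\omega}$, builds $f$ as a zigzag that alternately rises and falls along $\tilde{\omega}$ on a decreasing chain of intervals $[x_{n+1},x_n]$, and the crucial new step (where the two-sided estimate $(M-\varepsilon)\delta\leq\omega(\delta)\leq(M+\varepsilon)\delta$ near $0$ enters) is showing that the division points satisfy $x_n\to 0$; the facts $f(0+)=0$ and $\omega_f\leq\omega$ are then quoted from Breneis. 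You bypass all of this: since $\omega$ is continuous, positive on $(0,1]$ (positivity near $0$ follows from $M>0$ and propagates by monotonicity on $[0,1]\subset[0,b]$), and $\omega(\delta)/\delta\to M>0$, the number $c=\inf_{\delta\in(0,1]}\omega(\delta)/\delta$ satisfies $0<c\leq M$, and $f(x)=cx$ does everything the lemma asks: $f(0)=0$, $f([0,1])\subseteq[0,c]\subseteq[0,M]$, $\omega_f(\delta)=c\delta\leq\omega(\delta)$ for $\delta\in[0,1]$, and $f$ is nonconstant, which is the property actually needed for the lemma's sole use in the paper, namely placing such moduli in the family $\mathcal{H}$ of Definition~\ref{dfn:2.2}. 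Each of your steps checks out, including the compactness argument for $c>0$. Two remarks on the comparison. First, your method is even more general than your closing sentence suggests: it only needs $\liminf_{\delta\to 0}\omega(\delta)/\delta>0$ (existence and finiteness of the limit are never used, and one always has $c\leq\liminf_{\delta\to0}\omega(\delta)/\delta$), so as far as membership in $\mathcal{H}$ is concerned it also covers the case $\omega'(0)=\infty$ of nondecreasing moduli; the contrast you draw with Breneis's setting is therefore not the real dividing line. Second, what the heavier zigzag construction buys is structure a linear function cannot have: an $f$ that vanishes at every $x_n$, oscillates infinitely often near $0$, and whose growth follows $\tilde{\omega}$ rather than a mere linear minorant --- structure that matters in Breneis's original context of variation functions, but none of which is used elsewhere in this paper. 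For the statement as given, your argument is a complete and considerably shorter proof.
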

\begin{proof} The construction of the function $f$ follows as an adaptation of the arguments in \cite{Breneis}. Fix $\varepsilon\in (0,M)$ and pick a $\delta_0>0$ so that 
\begin{equation}\label{eqn:1lem}
(M-\varepsilon)\delta\leq \omega(\delta)\leq (M+\varepsilon)\delta\quad\forall \delta\in [0,\delta_0]. 
\end{equation}
Fix any increasing modulus of continuity $\tilde{\omega}$ on $\mathbb{R}_+$ which is $M-\varepsilon$-Lipschitz on $[0,1]$ and satisfies $\tilde{\omega}(h)=\tilde{\omega}(1)$ for $h\geq 1$ and $\omega_{\tilde{\omega}}=\tilde{\omega}$. Following \cite[]{Breneis} we will construct $f$ by induction on the intervals $[x_1, x_0]$, $[x_2, x_1],\dots$, where $x_0=1$ and $(x_n)$ is a strictly decreasing sequence in $[0,1]$ with $x_n\to 0$. 

Assume we have already constructed $f$ on the interval $[x_n, 1]$. If $x_n=0$, we have already defined $f$ on the entire interval $[0,1]$. Otherwise, we define $x_{n+1}$ and construct $f$ on the interval $[x_{n+1}, x_n]$. First, to every $x\in [0,x_n]$, we assign a point $y_x\in [x, x_n]$ with the property that
\[
\tilde{\omega}(y_x)= \frac{\tilde{\omega}(x) + \tilde{\omega}(x_n)}{2}. 
\]
Such a point $y_x$ exists, since $\tilde{\omega}$ is increasing and continuous. Define the set
\[
A_{n+1}=\Big\{ x\in [0,x_n]\colon \tilde{\omega}(x + h) -\tilde{\omega}(x)\leq \omega(h)\text{ for all } h\in [0,y_x - x]\Big\}.
\]
Clearly, $A_{n+1}$ is compact. Also, $x_n \in A_{n+1}$ since $y_{x_n}=x_n$. Let $x_{n+1}:=\inf A_{n+1}\in A_{n+1}$ and define $y_{n+1}:=y_{x_{n+1}}$. Furthermore,  for $z\in [x_{n+1}, x_n]$ set 
\[
f(z)=\left\{ 
\begin{aligned}
&\tilde{\omega}(z) - \tilde{\omega}(x_{n+1}) & \text{if } &\,z\in [x_{n+1}, y_{n+1}],\\
&\tilde{\omega}(x_n) - \tilde{\omega}(z)\,   & \text{if } &\,z\in [y_{n+1}, x_n]. 
\end{aligned}
\right.
\]
It follows that on the interval $[x_n, y_n]$, $f(z)= \tilde{\omega}(z)-\tilde{\omega}(x_n)$, and on the interval $[y_n, x_{n-1}]$, $f(z)= - \tilde{\omega}(z) +\tilde{\omega}(x_{n-1})$. In particular, we have $f(z)\leq M$ for all $z\in [0,1]$. In addition, the following properties were displayed in \cite[p.8]{Breneis}:
\begin{itemize}\setlength\itemsep{1mm} 
\item $f(x_n)=0$\;and\, $f(y_n)=2^{-1}\big( \tilde{\omega}(x_{n-1}) - \tilde{\omega}(x_n)\big)$.
\item $f$ is increasing (respectively, concave) on the intervals $[x_n, y_n]$ and decreasing (respectively, convex) on the intervals $[y_{n+1}, x_n]$. 
\end{itemize}

By construction the sequence $(x_n)$ is decreasing and bounded. The key, but different point here, is the proof that  $x=\lim_n x_n=0$. The proof of this fact is given below. 
\vskip .2cm 
\noindent{\bf Claim.} $x=0$. \\

Assume on the contrary that $x\neq 0$. Let $A_{n+1}$ be the sets defined on the bottom of the page $7$ in \cite{Breneis}. We know from (\ref{eqn:1lem}) that $\omega(h)\geq (M-\varepsilon)h$ for all $h\in [0,\delta_0]$. Pick a $n\in\mathbb{N}$ sufficiently large so that $0\leq x - x_n\leq \delta_0/2$. Define $z_{n+1}=\max\{ x/2, x_n -\delta_0\}$. Then 
\[
\tilde{\omega}(z_{n+1} +h) - \tilde{\omega}(z_{n+1})\leq (M-\varepsilon)h\leq \omega(h)\quad \forall h\in [0,\delta_0]. 
\]
Hence, $z_{n+1}\in A_{n+1}$ and $x_{n+1}=\min A_{n+1}\leq z_{n+1}< x$, a contradiction. 
\vskip .2cm 
\noindent Therefore $x=0$ and, in particular, $(x_n)$ is strictly decreasing. Notice that this also shows that $f$ is defined on the interval $(0,1]$. The proofs that $f(0+)=0$ and $\omega_f\leq \omega$ are exactly the same as that given in \cite[p.8-9]{Breneis}.  
\end{proof}

\vskip .1cm 

We close this section with an easy proposition whose proof is based on a classical argument of Goebel \cite[p.147]{Go2002}. 


\vskip .1cm 

 
\begin{proposition}\label{prop:2.7} Let $K$ be a nonempty convex subset of a Banach space $X$. If $S\colon K\to K$ is $L_0$-$\omega$-lipschitzian then for every $\varepsilon\in (0,1)$ there exists a $1+\varepsilon$-$\omega$-Lipschitz mapping $T\colon K\to K$ such that $\digamma(S)=\digamma(T)$ and $\mathrm{d}(T,K)=\lambda \mathrm{d}(S,K)$ for some $\lambda>0$. 
\end{proposition}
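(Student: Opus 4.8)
The plan is to use the classical convex-combination (Goebel averaging) trick, namely to average the given map with the identity rather than with a fixed point or with a contraction of $S$. Concretely, for a parameter $t\in(0,1]$ I would set
\[
T_t = (1-t)\,\mathrm{Id}_K + t\,S, \qquad\text{that is,}\qquad T_t x = (1-t)x + t\,Sx .
\]
Since $K$ is convex and both $x$ and $Sx$ lie in $K$, each $T_t$ maps $K$ into $K$, so $T_t$ is an admissible self-map. The whole proof then reduces to choosing $t$ correctly and verifying three elementary properties of $T_t$.

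First I would dispose of the trivial case: if $L_0\le 1+\varepsilon$, then $S$ already satisfies (\ref{eqn:dfn}) with constant $1+\varepsilon$, so I take $T=S$ and $\lambda=1$. Hence I may assume $L_0>1+\varepsilon$ and put $t=\varepsilon/(L_0-1)\in(0,1)$. Next I would verify the Lipschitz estimate. Using the triangle inequality together with the $L_0$-$\omega$-Lipschitz bound for $S$,
\[
\|T_t x - T_t y\| \le (1-t)\|x-y\| + t\,\|Sx-Sy\| \le \bigl(1-t+tL_0\bigr)\|x-y\| + t\,\omega(\|x-y\|).
\]
The choice of $t$ makes $1-t+tL_0 = 1+\varepsilon$, and since $t\in(0,1)$ we have $t\,\omega\le\omega$; thus $T_t$ is $1+\varepsilon$-$\omega$-Lipschitz with the \emph{same} modulus $\omega$.

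It then remains to record two identities, both immediate. For the fixed-point sets, $T_t x = x$ is equivalent to $t(Sx-x)=0$, i.e.\ $Sx=x$, so $\digamma(T_t)=\digamma(S)$. For the minimal displacement, $\|x-T_t x\| = \|\,t(x-Sx)\,\| = t\,\|x-Sx\|$ for every $x\in K$, and taking the infimum over $x\in K$ (using $t>0$) gives $\mathrm{d}(T_t,K)=t\,\mathrm{d}(S,K)$; one sets $\lambda=t$.

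I do not expect a genuine obstacle here. The only point requiring care is the dual role of averaging against the identity: this is precisely what simultaneously leaves the fixed-point set unchanged and, crucially, keeps the modulus term bounded by $\omega$ (because the factor $t$ multiplying $\omega$ is less than $1$) rather than inflating it. The two numerical checks that make everything fit are the identity $1-t+tL_0=1+\varepsilon$ and the bound $t<1$, both guaranteed by the choice $t=\varepsilon/(L_0-1)$ in the nontrivial range $L_0>1+\varepsilon$.
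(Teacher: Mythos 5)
Your proof is correct and is essentially the paper's own argument: the paper likewise takes the convex combination $T(x)=\delta Sx+(1-\delta)x$ with $0<\delta<1\wedge\frac{\varepsilon}{L_0-1}$, which is exactly your averaging against the identity. Your write-up merely makes explicit the verifications (the bound $1-t+tL_0\leq 1+\varepsilon$, the inequality $t\,\omega\leq\omega$, the identities $\digamma(T)=\digamma(S)$ and $\mathrm{d}(T,K)=t\,\mathrm{d}(S,K)$) and the trivial case $L_0\leq 1+\varepsilon$ that the paper leaves to the reader.
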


\begin{proof} It suffices to define 
\[
T(x) = \delta Sx + (1-\delta )x,
\]
where $0<\delta <1\wedge \dfrac{\varepsilon}{L_0-1}$.
\end{proof}


\medskip 


\section{Examples} As we already mentioned in the Introduction, our results concern the AFPP and FPP for the class of $L$-$\omega$-nonexpansive maps. Before delving into the details of the results, it is convenient to highlight some non-trivial examples of such mappings. Henceforth we will denote by $\digamma(T)$ the fixed point set of a mapping $T$. 


\vskip .1cm 


\begin{ex}\label{ex:4.1} Define the function $f\colon [0,1]\to [0,1]$ by $f(r)= r|\log(r^{\sigma})|$ where $\sigma\in (0,1]$. Then $\omega_f(\delta)\leq \omega(\delta):=\delta(L +|\log \delta|)$ for some $L\in (0,1)$. Indeed, for $r<s$ in $[0,1]$ we have
\[
\begin{split}
|f(r) - f(s)|&
=|r-s|\Big|\int_0^1 f'(r+ t(s-r))dt\Big|\\
&=|r- s|\Big|\int_0^1\big( - \frac{\sigma}{\ln 10} - \log( r + t(s-r))^{\sigma}\big)dt\Big| \\[1mm]
&\leq |r-s|\Big( \frac{\sigma}{\ln 10} - \int_0^1 \log\big(t(s-r)\big)dt\Big)\\[1mm]
&\leq |r-s|\Big( \frac{2\sigma}{\ln 10} + \big|\log|r-s|^{\sigma}\big|\Big)\leq |r-s|\Big( \frac{2\sigma}{\ln 10} + \big|\log|r-s|\big|\Big).
\end{split}
\]
\end{ex}


\vskip .1cm 


\begin{ex}\label{ex:4.2} The function $f(r)=(1-r)\cos(r|\log r|)$ is $2$-log-Lipschitz on $[0,1]$. Indeed, for $r< s$ in $[0,1]$ we have
\[
\begin{split}
|f(r) - f(s)|&\leq  |r - s| + \big|\cos(r|\log r|) - \cos(s|\log s|)\big|\\[1mm]
&\leq |r-s| + \big| r|\log r| - s|\log s|\big|\\[1mm]
 &\leq |r-s|\Big( 1 + \frac{2}{\ln 10} + \big|\log|r-s|\big|\Big).\hskip 1cm (\textrm{by Example \ref{ex:4.1}})
\end{split}
\]
\end{ex}


\vskip .1cm 


\begin{ex}\label{ex:4.3} Let $d\geq 3$ and consider the Newtonian potential 
\[
\Phi(x)= \frac{|x|^{2-d}}{d(d-2)\omega_d },
\]
where $x\in\mathbb{R}^d$ and $\omega_d$ is the volume of the unit ball in $\mathbb{R}^d$. Here $|\cdot|$ denotes the Euclidean norm. It is known that $\Phi$ satisfies Laplace's equation $\Delta \Phi=0$ on $\mathbb{R}^d\setminus\{0\}$. In addition, one has $-\Delta_x \Phi =\delta_0$ in the sense of distributions, where $\delta_0$ is the Dirac function. Assume now that $f\in L^1\cap L^\infty(\mathbb{R}^d)$ and consider the vector field ${\bf v}\colon \mathbb{R}^d \to\mathbb{R}^d$ given by
\[
{\bf v}_i(x)=\int_{\mathbb{R}^d} \partial_{x_i}\Phi(x- y){f}(y)dy,\quad i=1,\dots, d.
\] 
Taking advantage of some estimates of the Newtonian potential (cf. \cite[Lemma 2.2]{CGK}), ${\bf v}$ is bounded and $L$-log-Lipschitz on $x$ with constant $L$ depending only on $\|f\|_{L^1}$ and $\|f\|_{L^\infty}$. 
\end{ex}


\vskip .1cm 


\begin{ex}\label{ex:4.4} Let 
\[
K=\Bigg\{ x=\sum_{n=1}^\infty t_n e_n\in \ell_1\colon t_n \geq 0,\, \sum_{n=1}^\infty t_n\leq 1 \Bigg\},
\] 
where $(e_n)_{n=1}^\infty$ denotes the unit basis of $\ell_1$. Using that $r\mapsto r(2/\ln(10) + |\log r|)$ is increasing on $[0,1]$, it is easy to check that the mapping $T\colon K\to K$ given by 
\[
T(x)=\sum_{n=1}^\infty \frac{1}{2^n}t_n|\log t_n|e_n 
\]
is $L$-log-nonexpansive for some $L\in (0,1)$. Clearly, $0\in \digamma(T)$. 
\end{ex}



\begin{ex}\label{ex:4.5} Let $K=\big\{ x=\sum_{n=1}^\infty t_n e_n\colon t_n \geq 0,\, \sum_{n=1}^\infty t_n \leq \epsilon\big\}\subset \ell_1$, where $\epsilon\in (.1, .2)$ is such that $r\mapsto r|\log r|$ is nondecreasing on $[0,2\epsilon]$ and $\epsilon|\log\epsilon|<\epsilon$. Then for every $L\in (0,1]$, $K$ fails the FPP for $L$-log-nonexpansive maps. To check this fix $0<\theta < \min(1/2, L\ln 10)$ and $0<\eta<\min(\epsilon(1-\theta), L- \theta/\ln 10)$. Now define $T\colon K\to K$ by
\[
T(x)= \eta\Big(1 - \sum_{n=1}^\infty t_n\Big)e_1 + \theta\sum_{n=1}^\infty t_n |\log\sqrt{t_n}| e_{n+1}. 
\]
We are going to show that $T$ is well-defined and leaves $K$ invariant. First, note that if $(a_i)_{i=1}^N$ and $(b_i)_{i=1}^N$ are nonnegative numbers with $\sum_{i=1}^N a_i >0$ then by Jensen's inequality (cf. \cite[p. 202]{HStro}) we have
\begin{equation}\label{eqn:2.3}
\sum_{i=1}^N a_i b_i\cdot\log(b_i)\geq \Big(\sum_{i=1}^N a_i b_i \Big)\cdot \log\Bigg( \frac{\sum_{i=1}^N a_i b_i}{\sum_{i=1}^N a_i}\Bigg).
\end{equation}
Now, take any $x=\sum_{n=1}^\infty t_n e_n\in K$. Then $1-\sum_{n=1}^\infty t_n\geq 0$. Of course, we may assume that $\sum_{n=1}^\infty t_n>0$. Thus letting $a_i=t_i^{1/2}$ and $b_i=t_i^{1/2}$ in (\ref{eqn:2.3}) we get
\[
\begin{split} 
-\sum_{n=1}^N t_n \log\sqrt{t_n }&\leq -\Big( \sum_{n=1}^N t_n\Big)\cdot \log\Big( \sum_{n=1}^N t_n\Big)\leq \|x\|_1\big|\log\|x\|_1\big|,
\end{split}
\]
for all $N$ sufficiently large. Consequently,
\[
\begin{split}
\eta\Big(1 - \sum_{n=1}^\infty t_n\Big) + \theta\sum_{n=1}^\infty t_n |\log\sqrt{t_n} |&\leq 
\eta + \theta \epsilon\leq \epsilon.
\end{split}
\]
This shows that $T(x)\in K$ and hence $T(K)\subset K$. We proceed now to show that $T$ is fixed-point free. Let us assume for a contradiction that $x= T(x)$ for some $x=\sum_{n=1}^\infty t_n e_n\in K$. Then 
\[
t_1 = \eta\Big( 1 - \sum_{n=1}^\infty t_n\Big),\qquad t_{n+1}= \theta t_n|\log\sqrt{t_n}|\; \text{ for all } n\in\mathbb{N}.
\]
It is a simple matter to check that these equalities are incompatible. In fact, $t_1=0$ implies that all remaining $t_n$ are  zeros, contradicting thus the first equality. If $t_1>0$ then all numbers $t_n$ are also positive. But, since $t_n\to 0$ one may find an integer $n_0$ such that $\theta |\log\sqrt{t_n}|>1$ for all $n\geq n_0$, and hence $t_n\geq t_{n_0}>0$ for all $n\geq n_0$. It turns out that this contradicts $t_n\to 0$. We finally proceed to show that $T$ is $L$-log-nonexpansive. Take arbitrary points $x=\sum_{n=1}^\infty t_n e_n$ and $y=\sum_{n=1}^\infty s_n e_n$ in $K$. Then 
\[
\begin{split}
\| T(x) - T(y)\|_1&\leq \eta \| x- y\|_1 + \theta\sum_{n=1}^\infty \big| t_n |\log\sqrt{t_n}| - s_n|\log\sqrt{s_n}|\big|\\[1.2mm]
&\leq \eta \| x- y\|_1 + \theta\sum_{n=1}^\infty |t_n -s_n|\Big( \frac{1}{\ln 10} + \big|\log\sqrt{|t_n -s_n|}\big|\Big)\\[1.2mm]
&\leq \Big(\eta+ \frac{\theta}{\ln 10}\Big)\|x - y\|_1 + \theta\sum_{n=1}^\infty  |t_n - s_n|\big|\log\sqrt{|t_n - s_n|}\big|\\[1.2mm]
&\leq \|x - y\|_1\big( L + \big|\log\|x - y\|_1\big|\big),
\end{split}
\]
where in the second inequality we employed Example \ref{ex:4.1} with $\sigma=1/2$ and in the last inequality we used again (\ref{eqn:2.3}) in a convenient way. 
\end{ex}


\vskip .2cm 


The following mapping of Kakutani \cite{Kaku} highlights a concrete example of a $L$-$\omega$-nonexpansive mapping which is not Lipschitz. 

\vskip .2cm 

\begin{ex}\label{ex:4.6} Fix $L\in (0,1]$ and define $T\colon B_{\ell_2}\to B_{\ell_2}$ by 
\[
T(x) = \sqrt{1- \|x\|^2_2}\cdot e_1 + LRx,
\]
where $e_1$ is the first unit vector of $\ell_2$ and $R$ stands for the right-shift operator, that is, $Rx:=(0,x_1,x_2,\dots)$ for all $x=(x_i)_{i=1}^\infty \in \ell_2$. Let $\omega(\delta)=\sqrt{2\delta}$, $\delta\geq 0$. Direct calculation shows that 
\[
\|Tx - Ty\|_2\leq L\|x - y\|_2 + \omega(\|x - y\|_2)\,\quad\text{for all } x,y\in B_X.
\] 
Here $\|\cdot\|_2$ denotes the usual norm of $\ell_2$. Notice that $\omega$ satisfies ($\omega_2$).
\end{ex}


\medskip 


\section{Results}\label{sec:3}



We first prove the following general result. 

\begin{theorem}\label{thm:M1} Let $X$ be a Banach space. Then there exists a closed convex set $K\subset B_X$ such that for every $\varepsilon\in (0,1)$ and every modulus $\omega\in \mathcal{H}$, $K$ fails the FPP for $1+\varepsilon$-$\omega$-Lipschitz maps with null minimal displacements. In addition, if $X$ contains a complemented basic sequence then $K$ can be taken to be $B_X$. 
\end{theorem}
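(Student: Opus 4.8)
The plan is to reduce everything, via the Goebel averaging trick of Proposition \ref{prop:2.7}, to the construction of a \emph{single} auxiliary map. Concretely, it suffices to produce, for each $\omega\in\mathcal{H}$, a closed convex set $K\subset B_X$ and an $L_0$-$\omega$-Lipschitz self-map $S\colon K\to K$ (for some $L_0>1$) that is fixed-point-free and satisfies $\mathrm{d}(S,K)=0$. Indeed, given such an $S$, the map $T(x)=\delta Sx+(1-\delta)x$ satisfies $\digamma(T)=\digamma(S)=\emptyset$ and $\mathrm{d}(T,K)=\delta\,\mathrm{d}(S,K)=0$; choosing $\delta\le \varepsilon/(L_0-1)$ and $\delta$ small enough to absorb any constant factor appearing in front of $\omega$, the computation in Proposition \ref{prop:2.7} shows $T$ is $(1+\varepsilon)$-$\omega$-Lipschitz, so the \emph{same} $K$ works simultaneously for every $\varepsilon\in(0,1)$.

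For the domain and the map I would exploit the $\mathcal{H}$-witness. By definition there is a nonconstant $f\colon[0,1]\to\mathbb{R}_+$ with $\omega_f\le\omega$; replacing $f$ by $|f-f(0)|$ I may assume $f(0)=0$, $f\ge0$, $f$ nonconstant, and $\omega_f\le\omega$. Extracting a normalized basic sequence $(e_n)$ in $X$ (which exists since $\dim X=\infty$), I would model the map on Example \ref{ex:4.5}: on a set of the form
\[
K=\Big\{\textstyle\sum_{n\ge1}t_n\beta e_n:\ t_n\ge0,\ \sum_{n\ge1}t_n\le1\Big\}\subset B_X
\]
(with $\beta>0$ small, the series converging absolutely) define a shift-with-source
\[
S\Big(\textstyle\sum_n t_n\beta e_n\Big)=\gamma\Big(1-\sum_n t_n\Big)\beta e_1+\sum_n g(t_n)\,\beta e_{n+1},
\]
where $g=c\,f$ with $c>0$ small (so $\omega_g=c\,\omega_f\le\omega$ and $g(0)=0$). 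Fixed-point-freeness would follow exactly as in Example \ref{ex:4.5}: the source keeps the first coordinate positive while the shift forces the tail of any putative fixed point to violate summability. For $\mathrm{d}(S,K)=0$ I would use truncated orbits $t_{n+1}=g(t_n)$ ($n<N$): for these the displacement is concentrated in the single coordinate $e_{N+1}$ and is a multiple of $g(t_N)$, which can be driven to $0$ by steering the orbit toward a zero of $g$ (available since $f(0)=0$), even though the summability/source obstruction rules out an exact fixed point.

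The heart of the matter — and the step I expect to carry the technical weight — is to guarantee both the invariance $S(K)\subset K$ and the $L_0$-$\omega$-Lipschitz estimate on a general basic sequence, where no unconditionality is available. The triangle inequality yields, for free,
\[
\|Sx-Sy\|\le \gamma\beta\Big|\sum_n(t_n-s_n)\Big|+\beta\sum_n|g(t_n)-g(s_n)|\le \gamma\beta\Big|\sum_n(t_n-s_n)\Big|+\beta\sum_n\omega(|t_n-s_n|),
\]
so the real task is to dominate the right-hand side by $L_0\|x-y\|+\omega(\|x-y\|)$ and, separately, to bound $\sum_n g(t_n)$ so that $S$ lands back in $K$. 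In the logarithmic case this is precisely where the Jensen/entropy inequality (\ref{eqn:2.3}) of Example \ref{ex:4.5} enters; for a general $\omega\in\mathcal{H}$ that inequality has no analogue, so I would instead restrict $K$ to coordinate sequences with sufficiently rapid (e.g. geometric) decay, which makes $\sum_n\omega(t_n)$ convergent and lets the coordinate differences be controlled through the basis projections. Turning this control into the required bound, uniformly over $\omega\in\mathcal{H}$, is the main obstacle; the fixed-point and displacement analysis are comparatively soft.

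Finally, for the addendum, assume $(e_n)$ is complemented, say $P\colon X\to Y:=\overline{\operatorname{span}}(e_n)$ is a bounded projection, and let $\pi\colon B_X\to K$ be a Lipschitz retraction obtained by composing a scaled $P$ with a Lipschitz retraction of $B_Y$ onto the convex set $K$. Put $\widetilde S=S\circ\pi\colon B_X\to B_X$. Any fixed point of $\widetilde S$ lies in $K$, hence is a fixed point of $S$, so $\widetilde S$ is fixed-point-free; since $\pi|_K=\mathrm{id}$ we still have $\mathrm{d}(\widetilde S,B_X)=0$; and $\widetilde S$ is $L_0'$-$(c'\omega)$-Lipschitz because $\pi$ is Lipschitz. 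Running the averaging of Proposition \ref{prop:2.7} with $\delta$ small enough to absorb both $L_0'$ and the factor $c'$ then produces, on $K=B_X$, a $(1+\varepsilon)$-$\omega$-Lipschitz fixed-point-free map with null minimal displacement for every $\varepsilon\in(0,1)$.
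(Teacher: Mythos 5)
Your opening reduction (produce one fixed-point-free, null-displacement, $L_0$-$\omega$-Lipschitz map $S$ and then average via Proposition \ref{prop:2.7}) is exactly the paper's strategy, but the core construction you propose for $S$ has a genuine gap, and it is precisely the one you flag as ``the main obstacle'': it cannot be closed. Applying the $\mathcal{H}$-witness coordinate-wise inside a shift-with-source forces you to control sums of the form $\sum_n g(t_n)$ (for invariance) and $\sum_n \omega_g(|t_n-s_n|)$ (for the Lipschitz estimate), and for a generic witness there is no such control. Concretely, take $\omega(\delta)=\sqrt{\delta}$, which lies in $\mathcal{H}$ with witness $f=\omega$ by Proposition \ref{Prop:2.6}, and take $X=\ell_1$ with its unit vector basis. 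The points $x_N=\sum_{n\leq N}N^{-1}\beta e_n$ lie in your $K$, yet $\sum_n g(N^{-1})=c\sqrt{N}\to\infty$, so $S(x_N)\notin K$ for large $N$ no matter how small $c,\gamma,\beta$ are; moreover $\|S(x_N)-S(0)\|_1=(\gamma+c\sqrt{N})\beta$ while $\|x_N-0\|_1=\beta$, so no $L_0$-$\omega$-Lipschitz bound can hold either. Example \ref{ex:4.5} survives only because of the Jensen/entropy inequality (\ref{eqn:2.3}), which is special to $t\mapsto t|\log t|$, as you note. Your proposed repair (restricting $K$ to geometrically decaying coordinates) fails as well: the shift sends the $n$-th coordinate to the $(n+1)$-st slot, whose constraint is smaller by a factor $r$, whereas $g(r^n)=c\,r^{n/2}\gg r^{n+1}$; and it makes $K$ depend on $\omega$, which already violates the quantifier order of the statement --- $K$ must be fixed \emph{before} $\varepsilon$ and $\omega$ are given (your reduction, as phrased ``for each $\omega\in\mathcal{H}$ a set $K$'', only proves a weaker theorem).

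The paper avoids coordinate-wise use of $f$ altogether. On the single universal set $K=B_{\llbracket x_n\rrbracket}$ (span of a normalized basic sequence under a premonotone renorming) it defines
\[
S(x)= \sum_{n=1}^\infty \alpha_n t_n x_n + (1- \|x\|)\cos\big( f(\|x\|)-f(0)\big)\sum_{n=1}^\infty \beta_n x_n,
\]
so the witness $f$ enters only through the scalar $\|x\|$: the $\omega$-estimate reduces to the one-variable computation of Example \ref{ex:4.2}, namely $\big|(1-r)\cos(f(r)-f(0))-(1-s)\cos(f(s)-f(0))\big|\leq |r-s|+\omega(|r-s|)$, which together with premonotonicity gives $\|S(x)-S(y)\|\leq 2\|x-y\|+\omega(\|x-y\|)$; invariance is immediate; $x_k-S(x_k)=\beta_k x_k$ gives $\mathrm{d}(S,K)=0$; and a fixed point would force all coefficients $t_n$ to equal the same constant, hence to vanish, while $S(0)=\sum_n\beta_n x_n\neq 0$. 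This yields, on one $K$ and for every $\omega\in\mathcal{H}$ simultaneously, a $2$-$\omega$-Lipschitz map to which Proposition \ref{prop:2.7} applies exactly as in your first paragraph. For the addendum, since the paper's $K$ is a ball, a bounded linear projection composed with the ($2$-Lipschitz) radial retraction gives the needed retraction of $B_X$ onto $K$; your version needs a Lipschitz retraction onto a simplex-like set, whose existence for a general basic sequence is a further unproved step.
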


\begin{proof} The proof follows \cite[Theorem 4.1]{BF}. Let $(x_n)_{n=1}^\infty$ be a normalized basic sequence and let $\llbracket x_n \rrbracket$ denote its closed linear span. By taking an equivalent renorming on $\llbracket x_n \rrbracket$, we may assume that $(x_n)_{n=1}^\infty$ is normalized and premonotone, that is, $\|Q_Nx\|\leq \|x\|$ for all $x=\sum_{n=1}^\infty t_n x_n \in \llbracket x_n\rrbracket$ and $N\in \mathbb{N}$, where $Q_N(x)= \sum_{n=N}^\infty t_n x_n$. As for example, consider the norm $\vertiii{x}=\sup_n\|Q_n(x)\|$. Fix a decreasing sequence $\beta_n\to 0$ in $(0,1)$ so that $\sum_{n=1}^\infty \beta_n\leq 1$. For $n\in\mathbb{N}$, set $\alpha_n= 1-\beta_n$. Then $\|\sum_{n=1}^\infty \alpha_n t_n x_n\|\leq \|x\|$ for all $x=\sum_{n=1}^\infty t_nx_n \in \llbracket x_n \rrbracket$. Pick a nonconstant function $f\colon [0,1]\to\mathbb{R}_+$ such that $\omega_f\leq \omega$. Now, let $K=B_{\llbracket x_n\rrbracket}$ and define $S\colon K\to K$ by 
\[
S(x)= \sum_{n=1}^\infty \alpha_n t_n x_n + (1- \|x\|)\cos\big( f(\|x\|)-f(0)\big)\sum_{n=1}^\infty \beta_n x_n.
\]
It is easy to see that $S$ does not have fixed points in $K$. In addition, note that $x_k - S(x_k)= \beta_k x_k$ for all $k\in\mathbb{N}$. So, $\mathrm{d}(S,K)=0$. Finally, a direct calculation using Example \ref{ex:4.2} shows
\[
\|S(x)- S(y)\|\leq 2\| x - y\|+ \omega(\|x - y\|),\quad \forall x, y\in K.
\]
In view of Proposition \ref{prop:2.7} the result follows. 
\end{proof}


\vskip .1cm 

\begin{remark} We want to point out that unlike what occurs in \cite{BF}, the previous result shows that in general the construction of the mapping $S$ above may not result in a Lipschitz map.
\end{remark}


The following proposition shows that ($\omega_2$) is not sufficient to ensure AFPP under $L$-$\omega$-nonexpansiveness. 


\begin{proposition}\label{thm:M2} Let $C$ be a bounded closed noncompact convex subset of a Banach space $X$. Assume that $\omega$ satisfies ($\omega_2$). Then there exists a closed convex subset $K$ of $C$ such that for every $L\in (0,1]$, $K$ fails the AFPP for $L$-$\omega$-nonexpansive maps. 
\end{proposition}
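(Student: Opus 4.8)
The plan is to exploit the strength of condition ($\omega_2$) together with the negative half of the Lin--Sternfeld characterization, reducing everything to the production of a single \emph{$\omega$-nonexpansive} (i.e. $L=0$) self-map with positive minimal displacement on a suitably chosen subset. The first observation is that it suffices to build a closed convex $K\subseteq C$ carrying a map $T\colon K\to K$ with $\|Tx-Ty\|\leq \omega(\|x-y\|)$ for all $x,y\in K$ and $\mathrm{d}(T,K)>0$. Indeed, since $\omega(\delta)\leq L\delta+\omega(\delta)$ for every $L\in(0,1]$, such a $T$ is simultaneously $L$-$\omega$-nonexpansive for all $L\in(0,1]$, and hence witnesses the failure of the AFPP for each such $L$ at once.

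Next I would invoke \cite{LS}: since $C$ is a bounded, closed, convex, \emph{noncompact} subset of $X$, there exist a Lipschitz map $T_0\colon C\to C$ with some Lipschitz constant $M_0$ and a constant $d_0>0$ such that $\|x-T_0x\|\geq d_0$ for all $x\in C$, that is, $\mathrm{d}(T_0,C)=d_0>0$. This furnishes the ``bad'' map at the Lipschitz level; the remaining task is to convert it into an $\omega$-nonexpansive map by controlling the scale on which distances are measured.

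Here is where ($\omega_2$) enters. Because $\lim_{\delta\to 0}\omega(\delta)/\delta=\infty$, I can fix $\eta>0$ with $\omega(\delta)\geq M_0\,\delta$ for all $\delta\in(0,\eta]$; the point is that on any set of diameter at most $\eta$ an $M_0$-Lipschitz map is automatically $\omega$-nonexpansive. So I pass to a small homothetic copy of $C$: fix $b\in C$, choose $a\in(0,1)$ with $a\,\diam C\leq \eta$, and set $\phi(x)=ax+(1-a)b$, $K=\phi(C)$, and $T=\phi\circ T_0\circ\phi^{-1}\colon K\to K$. Since $a\in(0,1)$ and $b\in C$, convexity gives $K\subseteq C$, while $\phi$ is an affine homeomorphism of $X$, so $K$ is closed and convex. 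A homothety scales all distances by $a$, whence $\diam K=a\,\diam C\leq\eta$, the conjugated map $T$ keeps the same Lipschitz constant $M_0$, and $\|u-Tu\|=a\|\phi^{-1}(u)-T_0\phi^{-1}(u)\|\geq a\,d_0$, so that $\mathrm{d}(T,K)\geq a\,d_0>0$. Finally, for $u,v\in K$ we have $\|u-v\|\leq\eta$, hence $\|Tu-Tv\|\leq M_0\|u-v\|\leq\omega(\|u-v\|)$, so $T$ is $\omega$-nonexpansive with positive minimal displacement, which completes the argument.

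The only genuine input is the Lin--Sternfeld map $T_0$; once that is in hand, the entire proof rests on the single observation that ($\omega_2$) lets one absorb any fixed Lipschitz constant into $\omega$ after shrinking the diameter, which is precisely what the homothety $\phi$ achieves (scale-invariance of the Lipschitz constant together with linear scaling of the displacement). I therefore expect the main obstacle to be correctly quoting and justifying the existence of a positively-displaced Lipschitz self-map on a noncompact bounded closed convex set; the scaling verifications are routine.
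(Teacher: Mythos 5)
Your proposal is correct and takes essentially the same route as the paper: invoke Lin--Sternfeld to produce a Lipschitz self-map of $C$ with positive minimal displacement, then pass to a small homothetic copy of $C$ whose diameter is small enough that ($\omega_2$) absorbs the Lipschitz constant. The only (harmless) differences are that you absorb the full constant $M_0$ into $\omega$, obtaining a single $\omega$-nonexpansive ($L=0$) witness that works for all $L\in(0,1]$ at once, whereas the paper first shrinks the constant to $1+\varepsilon\leq 1+L$ via Goebel's trick and builds an $L$-dependent map, and your homothety centered at $b\in C$ replaces the paper's separate translation argument for the case $0\notin C$.
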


\begin{proof} We first assume that $0\in C$. Fix $L\in (0,1]$ and take $\varepsilon \in (0,1)$ such that $\varepsilon \leq L$. By Lin-Sternfeld's result \cite{LS} and an easy shrinking's Lipschitz constant argument due to Goebel (cf. \cite[p. 147]{Go2002}), there exists a $1+\varepsilon$-Lipschitz mapping $P\colon C\to C$ with $\mathrm{d}(P,C)>0$. Now pick a number $\delta_0\in (0,1)$ so that $\omega(\delta)\geq \delta$ for all $\delta\in (0, \delta_0]$. Let $R:=\sup_{x\in C}\|x\|$ and choose $\eta\in (0,1)$ so that $2\eta R< \delta_0$. Set $K=\eta C$ and define $T\colon K\to K$ by $T(u)= \eta P(u/\eta)$. Notice that $K\subset C$, $\diam K\leq \delta_0$ and
\[
\begin{split}
\|T(u) - T(v)\|&\leq (1 + \varepsilon)\| u - v\|\leq L\|u - v\| + \|u- v\|\leq L\| u- v\| + \omega(\| u - v\|),
\end{split}
\] 
for all $u, v\in K$. Assume now that $0\not\in C$. Fix $x_0\in C$ and set $C_0:=C-\{x_0\}$. Applying the previous argument to $C_0$ we obtain a number $\eta>0$ and a $L$-$\omega$-nonexpansive mapping $T_0\colon \eta C_0\to \eta C_0$ with positive minimal displacement. Consider the mapping $T\colon \eta C\to \eta C$ given by $T(u) = T_0(u - \eta_0) + \eta x_0$. It is easy to check that $T$ is $L$-$\omega$-nonexpansive and has positive minimal displacement. 
\end{proof}



\begin{remark} The above leads us to ask under which condition on $\omega$:
\begin{itemize}
\item[($\mathcal{Q} 1$)] Does convex subsets of a Banach space have the AFPP for $\omega$-nonexpansive maps? 
\item[($\mathcal{Q} 2$)] Does the closed unit ball of any Banach space have the AFPP for $L$-$\omega$-nonexpansive maps?
\end{itemize}
\end{remark}


\vskip .1cm 


We do not know whether these questions have a full affirmative answer. However, as we shall see later on, some positive results can be obtained in certain moduli of continuity.  Our next result illustrates how the AFPP for log-nonexpansive maps lifts to unbounded sets.


\vskip .1cm 


\begin{proposition}\label{prop:5.5} Let $C$ be an unbounded convex subset of a Banach space $X$. Assume that $\omega$ is a modulus of continuity in which $\omega(1)=0$ and every bounded convex subset of $C$ has the AFPP for $\omega$-nonexpansive maps. Then $C$ has the same property. 
\end{proposition}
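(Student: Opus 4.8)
The plan is to reduce the unbounded problem to a bounded one: I would show that every $\omega$-nonexpansive self-map of $C$ automatically has bounded range, produce a bounded convex invariant subset $K\subseteq C$, and then invoke the hypothesis on $K$. So fix an arbitrary $\omega$-nonexpansive map $T\colon C\to C$; the goal is to prove $\mathrm{d}(T,C)=0$, i.e. $\inf_{x\in C}\|x-Tx\|=0$.

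The decisive step is to show that $T(C)$ is bounded, and this is where $\omega(1)=0$ is used. First note that if $\|x-y\|=1$ then $\|Tx-Ty\|\leq\omega(1)=0$, so $Tx=Ty$. Now I would take arbitrary $x,y\in C$, set $d=\|x-y\|$, and parametrize the joining segment by arc length, $z(t)=(1-t/d)x+(t/d)y$ for $t\in[0,d]$, so that $z(t)\in C$ by convexity and $\|z(t)-z(s)\|=|t-s|$. Writing $d=n+r$ with $n=\lfloor d\rfloor$ and $r\in[0,1)$, the consecutive points $z(0),z(1),\dots,z(n)$ sit at unit distance, whence $Tx=Tz(0)=Tz(1)=\cdots=Tz(n)$ by the observation above, while $\|z(n)-y\|=r<1$ gives $\|Tz(n)-Ty\|\leq\omega(r)$. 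Hence $\|Tx-Ty\|\leq\omega(r)\leq M_\omega$, where $M_\omega:=\max_{t\in[0,1]}\omega(t)<\infty$ by continuity of $\omega$ on the compact interval $[0,1]$. Since $x,y$ were arbitrary, $\diam T(C)\leq M_\omega<\infty$.

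With boundedness established, I would set $K=\mathrm{conv}\big(T(C)\big)$. Because $C$ is convex and $T(C)\subseteq C$, we get $K\subseteq C$; because $T(C)$ lies in a ball of radius $M_\omega$ and balls are convex, $K$ is bounded; and $K$ is convex by construction, so $K$ is a bounded convex subset of $C$. Moreover $T(K)\subseteq T(C)\subseteq K$, so $T$ restricts to an $\omega$-nonexpansive self-map of $K$. Applying the hypothesis that every bounded convex subset of $C$ has the AFPP for $\omega$-nonexpansive maps yields $\inf_{x\in K}\|x-Tx\|=0$, and since $K\subseteq C$ this forces $\mathrm{d}(T,C)\leq\inf_{x\in K}\|x-Tx\|=0$. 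As $T$ was arbitrary, $C$ has the AFPP for $\omega$-nonexpansive maps.

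The hard part will be the boundedness step, and the subtlety to watch is that $\omega$ need not be monotone: $\omega(1)=0$ controls $T$ only on pairs at distance \emph{exactly} $1$, not on all pairs within distance $1$. The chaining-along-segments argument circumvents this precisely by using convexity to split any long segment into unit-length links, on each of which $T$ is literally constant, leaving a single sub-unit remainder that is absorbed harmlessly into $M_\omega$. Once this is in place, the construction of the invariant bounded convex set $K$ and the appeal to the hypothesis are entirely routine.
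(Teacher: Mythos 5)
Your proof is correct and follows essentially the same strategy as the paper's: exploit $\omega(1)=0$ together with convexity to chain points at unit distance, produce a bounded convex $T$-invariant subset, and invoke the hypothesis on it. The only difference is in execution: the paper normalizes $0\in C$, decomposes $C$ into the annuli $C_n=\{x\in C\colon n-1<\|x\|\leq n\}$ and proves by induction (chaining radially via $y_0=r_0x$) that $T(C)\subseteq T(C_1)$, taking $K=\mathrm{conv}(T(C_1))$, whereas you chain along the segment joining two arbitrary points to get the direct diameter bound $\diam T(C)\leq \max_{t\in[0,1]}\omega(t)$ and take $K=\mathrm{conv}(T(C))$ --- a slightly cleaner variant that avoids the translation and the induction.
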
 

\begin{proof} Assume that $T\colon C\to C$ is $\omega$-nonexpansive. 
We may assume without loss of generality that $0\in C$, as shown by a simple translation argument. Define $C_1= B_X\cap C$ and for every $n\geq 2$ let 
\[
C_n = \{ x\in C\colon n-1 < \| x\|\leq n\}. 
\]
Clearly, we have that $C=\bigcup_{n=1}^\infty C_n$. Take $x\in C_2$ and consider on $[0,1]$ the function $f_x(r)= (1-r)\|x\|$. Since $f_x(0)=\|x\|>1$ and $f_x(1)=0$, by the Intermediate Value Theorem there is $r_0\in (0,1)$  such that $f_x(r_0)=1$. Let $y_0= r_0 x$. Clearly $\|y_0 - x\|=(1-r_0)\|x\|=1$ and $y_0\in C_1$. Since $T$ is $\omega$-nonexpansive, we have $\|T x - Ty_0\|=0$ and this in turn implies $Tx\in T(C_1)$ and $T(C_2)\subset T(C_1)$. Assume that for some $n$, we have already proved that $T(C_{n+1})\subset T(C_n)$. Let $x\in C_{n+2}$. Similarly as before, there is $r_0\in (0,1)$ so that $(1-r_0)\|x\|=1$. Let $y_0=r_0 x$ and note that $\| y_0 -x\|= \|x\| - \|y_0\|=1$ and $y_0\in C$. Moreover, $y_0\in C_{n+1}$. Indeed, since $n+1< \|x\|\leq n+2$ and $\|x \|- \|y_0\|=1$ we have $n< \| y_0\|\leq n+1$, so $y_0\in C_{n+1}$ as claimed. Now because $T$ is $\omega$-nonexpansive, $\|Ty_0 - Tx\|=0$. So, $Tx= Ty_0\in T(C_{n+1})$, that is, $T(C_{n+2})\subset T(C_{n+1})$. Therefore $T(C)\subset T(C_1)$. Consequently, if $K=\mathrm{conv}(TC_1)$, $K$ is a convex and bounded subset of $C$ which is $T$-invariant. By assumption the result follows. 
\end{proof}



A well-known result of Karlovitz \cite[Theorem 1]{Karlo} asserts that weak-star closed convex bounded subsets of dual spaces have FPP for nonexpansive mappings. It is also well-known that hyperconvex metric spaces enjoy the same property. In particular, the balls $B_{(\ell_1,\|\cdot\|_1)}$ and $B_{(\ell_\infty,\|\cdot\|_\infty)}$ have the FPP for such maps. One might wonder whether similar results hold true for log-nonexpansive maps. Our next result not only answers this question affirmatively, but also describes what these maps really look like. 

\vskip .2cm 

\begin{theorem}\label{thm:5.6} Let $X$ be a normed space with $\mathrm{dim}X\geq 2$. Assume that $\omega$ is a modulus of continuity satisfying $\omega(1)=0$. Then a mapping $T\colon B_X \to B_X$ is $\omega$-nonexpansive iff it is constant. 
\end{theorem}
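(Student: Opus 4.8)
The \emph{if} direction is immediate: a constant $T$ satisfies $\|Tx-Ty\|=0\le\omega(\|x-y\|)$ for all $x,y$, hence is $\omega$-nonexpansive. All the content is in the \emph{only if} direction, and the plan is to exploit the single hypothesis $\omega(1)=0$, which forces
\[
\|x-y\|=1\ \Longrightarrow\ \|Tx-Ty\|\le\omega(1)=0,\quad\text{i.e. } Tx=Ty .
\]
Thus any two points of $B_X$ lying at distance exactly $1$ are collapsed by $T$, and the problem reduces to the purely geometric one of connecting arbitrary points of $B_X$ by short chains of points at mutual distance $1$.

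The key observation is that the origin is a universal connector for the unit sphere $S_X=\{z\in X:\|z\|=1\}$: every $z\in S_X$ satisfies $\|z-0\|=1$, so $Tz=T0$. Hence $T$ is already constant, equal to $T0$, on $S_X\cup\{0\}$. It then suffices to link each interior point to the sphere by a single distance-$1$ step. Concretely, I would prove the claim: for each $x\in B_X\setminus\{0\}$ there exists $z\in S_X$ with $\|z-x\|=1$. Granting this, $\|z-x\|=1$ yields $Tx=Tz=T0$, and since the case $x=0$ is trivial, we conclude $T\equiv T0$ on all of $B_X$.

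To establish the claim I would apply the intermediate value theorem to the continuous function $\phi(z)=\|z-x\|$ on $S_X$. Writing $r=\|x\|\in(0,1]$, the two antipodal points $x/r$ and $-x/r$ of $S_X$ give $\phi(x/r)=|1-r|=1-r<1$ and $\phi(-x/r)=1+r>1$. Since $\dim X\ge 2$, the sphere $S_X$ is connected (in fact path-connected), so the continuous $\phi$ attains the intermediate value $1$ at some $z\in S_X$, which is exactly the connector sought.

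The \textbf{main obstacle}, and the only place where the hypothesis $\dim X\ge 2$ enters, is precisely the connectedness of $S_X$: in dimension one the sphere degenerates to two points, $\phi$ may jump past the value $1$, and both the argument and the statement fail. Everything else is elementary --- continuity of the norm, the normalization $\|x/r\|=1$, and the forcing consequence of $\omega(1)=0$. I would therefore organize the write-up as (i) the trivial \emph{if} direction; (ii) the distance-$1$ collapse coming from $\omega(1)=0$; (iii) the identity $Tz=T0$ for $z\in S_X$; (iv) the intermediate-value claim linking each interior point to $S_X$; and (v) assembling these to conclude that $T$ is constant.
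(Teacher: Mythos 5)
Your proof is correct and takes essentially the same route as the paper: both exploit $\omega(1)=0$ to collapse pairs at distance one, note that $Tz=T0$ on $S_X$, and then use the connectedness of $S_X$ (this is where $\dim X\geq 2$ enters) together with the intermediate value theorem to find, for each interior point $x$, a point of $S_X$ at distance exactly $1$ from $x$. The only cosmetic difference is that the paper applies the IVT along an explicit path $\gamma$ joining $x/\|x\|$ to $-x/\|x\|$ in $S_X$, whereas you apply it directly to the continuous function $\phi(z)=\|z-x\|$ on the connected sphere; the two formulations are interchangeable.
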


\begin{proof} For $z\in B_X$ let $S_z$ the unit sphere in $X$ centered at $z$. In particular, $S_0=S_X$. Assume $T$ is $\omega$-nonexpansive. Then $Tx=T0$ for all $x\in S_X$. If $z\in B_X$ is such that $0<\|z\|<1$ then we also have $Tz=T0$. To see why, let $x_0=\|z\|^{-1} z$ and $y_0=-x_0$. Since $\mathrm{dim}X\geq 2$, $S_X$ is arc-connected, so there is a path $\gamma\colon [0,1]\to S_X$ such that $\gamma(0)=x_0$ and $\gamma(1)=y_0$. Consider on $[0,1]$ the function $f(t)=\|z - \gamma(t)\|$. As $f(0)<1$ and $f(1)>1$, by the Intermediate Value Theorem there is $t_0\in [0,1]$ so that $f(t_0)=1$. By $\omega$-nonexpansiveness, we have $Tz = T\gamma(t_0)= T0$. 
\end{proof}



\begin{proposition}\label{prop:5.7} Let $(X,\|\cdot\|)$ be either $(\ell_1,\|\cdot\|_1)$ or $(\co,\|\cdot\|_\infty)$. Assume that $\omega$ is as in above. Then every $\omega$-nonexpansive self-mapping the positive part $K$ of $B_X$ is constant. 
\end{proposition}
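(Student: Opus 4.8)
The plan is to reuse the mechanism of Theorem \ref{thm:5.6}: since $\omega(1)=0$, the $\omega$-nonexpansiveness of $T$ forces $\|Tx-Ty\|\le\omega(\|x-y\|)=\omega(1)=0$, hence $Tx=Ty$, whenever $\|x-y\|=1$. It therefore suffices to connect each point of $K$ to the origin (which lies in $K$) through points at distance exactly $1$, and then to propagate the conclusion by continuity. I first record that $T$ is (uniformly) continuous: from $\|Tx-Ty\|\le\omega(\|x-y\|)$ and the continuity of $\omega$ at $0$ with $\omega(0)=0$ we get $Ty\to Tx$ as $y\to x$.

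The core step is the \emph{claim} that every $x\in K$ possessing at least one zero coordinate satisfies $Tx=T0$; I would prove it in both spaces by exhibiting a single auxiliary point $w\in K$ lying at distance exactly $1$ from both $x$ and $0$, so that $Tx=Tw=T0$. In $(\co,\|\cdot\|_\infty)$, if $x_k=0$ put $w=x+e_k$: then $w\in K$, the only changed coordinate gives $\|x-w\|_\infty=|0-1|=1$, and $\|w\|_\infty=\max(\|x\|_\infty,1)=1$, i.e.\ $\|w-0\|_\infty=1$. In $(\ell_1,\|\cdot\|_1)$, writing $s=\|x\|_1\in[0,1]$ and letting $x_j=0$, set $w_n=x_n/2$ for $n\ne j$ and $w_j=1-s/2$: then $w\ge 0$, $\|w\|_1=s/2+(1-s/2)=1$ and $\|x-w\|_1=\sum_{n\ne j}x_n/2+(1-s/2)=1$, so again $\|w-0\|_1=\|x-w\|_1=1$. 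In either space $\omega(1)=0$ yields $Tx=Tw=T0$.

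I then remove the zero-coordinate hypothesis by density. The truncations $x^{(m)}=\sum_{n\le m}x_n e_n$ belong to $K$ and have vanishing coordinates for $n>m$, so the claim gives $Tx^{(m)}=T0$; moreover $x^{(m)}\to x$ (in $\ell_1$ because $\sum_{n>m}x_n\to 0$, in $\co$ because $\sup_{n>m}x_n\to 0$). Continuity of $T$ then gives $Tx=\lim_m Tx^{(m)}=T0$, so $T$ is the constant $T0$ on all of $K$.

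The step I expect to be the genuine obstacle — and the reason a naive imitation of Theorem \ref{thm:5.6} does not close the argument — is precisely the density/continuity passage in the $\co$ case. There $\diam K\le 1$, and a point $x$ all of whose coordinates lie in $(0,1)$ admits \emph{no} $w\in K$ with $\|x-w\|_\infty=1$: since $x_n,w_n\to 0$ the supremum $\sup_n|x_n-w_n|$ is attained, so equality to $1$ would require a coordinate with $\{x_n,w_n\}=\{0,1\}$, impossible when every $x_n\in(0,1)$. Thus the relation ``distance exactly $1$'' cannot by itself reach these interior points, and one is forced to approximate them by zero-coordinate points and invoke the continuity of $T$; getting the truncations to serve simultaneously as the approximating sequence and as points to which the claim applies is the delicate bookkeeping to handle carefully.
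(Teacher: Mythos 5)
Your proof is correct, but it takes a genuinely different route from the paper's. The paper does not work inside $K$ at all: it introduces the nonexpansive, norm-preserving retraction $R\colon B_X\to K$, $R\big(\sum_n t_ne_n\big)=\sum_n|t_n|e_n$, sets $P=T\circ R$, notes $\|Px-Py\|\le\omega(\|Rx-Ry\|)$, obtains $Pz=P0$ for $z\in S_X$ from $\|Rz-R0\|=\|Rz\|=\|z\|=1$ and $\omega(1)=0$, and then handles the points with $0<\|z\|<1$ by invoking the sphere-path/intermediate-value argument of Theorem \ref{thm:5.6}. You instead stay inside $K$, manufacture explicit companions at distance exactly $1$ (the $x+e_k$ trick in $\co$, the rescale-and-fill vector $w$ in $\ell_1$), and finish by truncation plus uniform continuity of $T$. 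What your approach buys is precisely the point you flag at the end: the reduction to Theorem \ref{thm:5.6} is not automatic, because the only usable relation is ``$\|Ru-Rv\|=1$'', which links points of $K$, and in $(\co,\|\cdot\|_\infty)$ a point of $K$ all of whose coordinates lie in $(0,1)$ is at distance strictly less than $1$ from \emph{every} point of $K$ (the supremum is attained, so distance $1$ forces a coordinate pair $\{0,1\}$). Transported through $R$, the path argument of Theorem \ref{thm:5.6} cannot reach such points; indeed the function $f(t)=\|Rz-R\gamma(t)\|$ along the path $\gamma$ from $x_0=z/\|z\|$ to $-x_0$ used there satisfies $f(1)=f(0)<1$, since $R(-x)=Rx$, so the intermediate value theorem yields nothing. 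Your density/continuity step closes exactly this gap, which the paper's phrase ``by proceeding as in the proof of Theorem \ref{thm:5.6}'' leaves implicit (in $\ell_1$, by contrast, one can run an IVT argument inside the face $K\cap S_{\ell_1}$, which is convex, so the issue is specific to $\co$). In short, your argument is valid and, on this point, more complete than the paper's sketch.
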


\begin{proof} Let $T\colon K\to K$ be a $\omega$-nonexpansive mapping. Notice that either 
\[
K=\Bigg\{ \sum_{n=1}^\infty t_n e_n \colon t_n\geq 0, \sum_{n=1}^\infty t_n\leq 1\Bigg\}\quad\text{or}\quad K=\Bigg\{ \sum_{n=1}^\infty t_n e_n\colon 0\leq t_n\leq 1\Bigg\}, 
\]
where $(e_n)_{n=1}^\infty$ denotes the unit basis of $X$. To show that $T$ is constant, define $R\colon B_X\to K$ by $R(x)=\sum_{n=1}^\infty |t_n|e_n$, $x=\sum_{n=1}^\infty t_n e_n$. It is easy to see that $R$ is a nonexpansive retraction so that $\| Rx\|= \|x\|$ for all $x\in B_X$. It suffices then to show that  $P=T\circ R$ is constant. To do this, note that $\|Px - Py\|\leq \omega(\|Rx - Ry\|)$ for all $x, y\in B_X$. Let $z\in B_X$ be arbitrary. If $z\in S_X$ then (since $R0=0$ and $\|Rz\|=1$) we have $Pz=P0$. By proceeding as in the proof of Theorem \ref{thm:5.6} we then see that $Pz = P0$ for all $z\in B_X$ with $0< \|z\|<1$. This proves the result. 
\end{proof}


\vskip .1cm 


\begin{remark} The previous results are in a certain sense sharp. Consider the set
\[
C:=\Big\{ f\in L_1[0,1] \colon 0\leq f(x)\leq  1,\, \forall x\in [0,1]\Big\}
\] 
and consider the Alspach's mapping $T\colon C\to C$ (see \cite{Als} and \cite[p. 222]{DL}) given by
\[
(Tf)(x)=( 2f(2x)\wedge 1)\chi_{[0,1/2)}(x) + ( (2f(2x-1)\vee 1) - 1) \chi_{[1/2,1)}(x).
\] 
Choose $0<\alpha<1$ so that $|\log r|\geq 1$ for all $0<r\leq 2\alpha$. In \cite[Lemma 3.4 and Theorem 3.7]{DL} Day and Lennard showed that $C_\alpha=\{ f\in C\colon \|f\|_{L_1}=\alpha\}$ is $T$-invariant and $T$ is fixed-point free on $C_\alpha$. Moreover, $T$ has clearly null minimal displacement and is log-nonexpansive since 
\[
\|T(f) - T(g)\|_{L_1}= \|f - g\|_{L_1}\leq \|f- g\|_{L_1}\big|\log\|f - g\|_{L_1}\big|\quad\forall f, g\in C_\alpha. 
\]
\end{remark}


\vskip .2cm 


In general, the FPP is not implied by hyperconvexity in our context. 

\vskip .2cm 


\begin{proposition}\label{prop:5.9} The following statements hold:
\begin{itemize}
\item There exists $\epsilon\in (0,1)$ so that the closed ball $B_{\ell_\infty}(\epsilon)$ of center $0$ and radius $\epsilon$ fails the FPP for $L$-log-nonexpansive maps no matter what $L\in (0,1]$. 
\item $B_{\ell_\infty}$ fails the FPP for $1+\delta$-log-Lipschitz maps for all $\delta\in (0,1)$. 
\end{itemize}
\end{proposition}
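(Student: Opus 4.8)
The plan is to derive both statements from a single ingredient: a fixed-point-free Lipschitz self-map of the whole ball $B_{\ell_\infty}$. Write $\omega(\delta)=\delta|\log\delta|$ throughout. Since $\ell_\infty$ is infinite-dimensional, its unit sphere $S_{\ell_\infty}$ is a Lipschitz retract of $B_{\ell_\infty}$ (a classical fact, due to Benyamini and Sternfeld). Fix such a Lipschitz retraction $r\colon B_{\ell_\infty}\to S_{\ell_\infty}$ and set $S=-r$. If $S(x)=x$ then $x=-r(x)\in S_{\ell_\infty}$, so $r(x)=x$ and hence $x=-x$, which is impossible on the sphere; thus $S\colon B_{\ell_\infty}\to B_{\ell_\infty}$ is fixed-point-free. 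Let $C$ be a Lipschitz constant of $S$; I may take $C>1$, since a $C$-Lipschitz map is trivially $C'$-$\omega$-Lipschitz for any $C'\geq C$ (the term $\omega\geq 0$ comes for free).

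For the second statement I would simply feed $S$ into Proposition \ref{prop:2.7}. Indeed $S$ is $C$-log-Lipschitz with $C>1$, so for each $\delta\in(0,1)$ Proposition \ref{prop:2.7} produces a $1+\delta$-log-Lipschitz map $T\colon B_{\ell_\infty}\to B_{\ell_\infty}$ with $\digamma(T)=\digamma(S)=\emptyset$. This is exactly the failure of the FPP for $1+\delta$-log-Lipschitz maps on $B_{\ell_\infty}$.

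For the first statement the idea is that on a small ball the modulus $\omega$ dominates any fixed linear growth, because $|\log\delta|\to\infty$ as $\delta\to0$. Concretely, I would rescale: for $\epsilon\in(0,1)$ put $T_\epsilon(u)=\epsilon\,S(u/\epsilon)$ for $u\in B_{\ell_\infty}(\epsilon)$. Then $T_\epsilon$ maps $B_{\ell_\infty}(\epsilon)$ into itself, is again $C$-Lipschitz, and is fixed-point-free (a fixed point of $T_\epsilon$ would give one of $S$). For $u,v\in B_{\ell_\infty}(\epsilon)$ with $\delta:=\|u-v\|\leq 2\epsilon$ one has
\[
\|T_\epsilon u-T_\epsilon v\|\leq C\delta\leq L\delta+\delta|\log\delta|=L\delta+\omega(\delta),
\]
for every $L\in(0,1]$, provided $C\leq|\log(2\epsilon)|$ (the binding case is $\delta=2\epsilon$, where $|\log\delta|$ is smallest, and there $C\leq|\log(2\epsilon)|\leq L+|\log\delta|$). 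Hence it suffices to fix $\epsilon$ so small that $|\log(2\epsilon)|\geq C$; the single map $T_\epsilon$ is then $L$-log-nonexpansive and fixed-point-free for every $L\in(0,1]$ at once.

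The routine parts are the rescaling estimate and the bookkeeping of constants. The genuine content, and the step I expect to be the main obstacle if one insists on an elementary self-contained argument, is the existence of the base map on $B_{\ell_\infty}$. The naive candidate, the prepend-and-shift map $x\mapsto(c(x),x_1,x_2,\dots)$, necessarily has a fixed point in $\ell_\infty$: solving the fixed-point recursion forces a constant sequence $(a,a,\dots)$ with $a=c((a,a,\dots))$, and the Intermediate Value Theorem applied to $a\mapsto c((a,a,\dots))-a$ on $[-1,1]$ yields such an $a$ (the all-ones vector being the obstruction for the pure shift). One is therefore forced to exploit the infinite-dimensional non-compactness of $B_{\ell_\infty}$ in an essential way, which is precisely what the Lipschitz retraction onto the sphere supplies.
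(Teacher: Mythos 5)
Your proposal is correct, but it takes a genuinely different route from the paper's. The paper proves Proposition \ref{prop:5.9} by an explicit construction inspired by an example of S.~Prus: fixing a Banach limit $\mathfrak{L}$ on $\ell_\infty$ and suitable $\sigma,\epsilon$, it defines
\[
S(x)=\big(\epsilon-|\mathfrak{L}(x)||\log|\mathfrak{L}(x)|^{\sigma}|,\ |t_1||\log|t_1|^{\sigma}|,\ |t_2||\log|t_2|^{\sigma}|,\dots\big),
\]
rules out fixed points by analyzing the recursion $t_{n+1}=t_n|\log|t_n|^{\sigma}|$ (whose limit $a$ would have to satisfy $a=a|\log a^{\sigma}|$, impossible for $\epsilon^{\sigma}<1/10$), verifies $L$-log-nonexpansiveness by hand via Example \ref{ex:4.1}, and then obtains the second statement by scaling $S$ up to the unit ball. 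You instead derive both statements from a single black box --- a Lipschitz fixed-point-free self-map of $B_{\ell_\infty}$, obtained from the Benyamini--Sternfeld retraction (equally well you could invoke Lin--Sternfeld \cite{LS}, which the paper already uses in Proposition \ref{thm:M2}, since $B_{\ell_\infty}$ is noncompact; that would even give positive minimal displacement and hence failure of AFPP, not just FPP) --- combined with two soft devices: scaling down so that the blow-up of $|\log\delta|$ near $0$ absorbs the Lipschitz constant (statement 1), and Goebel's averaging, Proposition \ref{prop:2.7} (statement 2). All your steps check out: $-r$ is fixed-point-free, the rescaled map $T_\epsilon$ is $L$-log-nonexpansive for every $L\in(0,1]$ at once as soon as $|\log(2\epsilon)|\geq C$, and Proposition \ref{prop:2.7} preserves the empty fixed-point set. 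It is worth noting that your mechanism for statement 1 is essentially the paper's own Proposition \ref{thm:M2} specialized to $\omega(\delta)=\delta|\log\delta|$ and $C=B_{\ell_\infty}$ (there $K=\eta C$ is exactly a small ball), so that statement is nearly a corollary of results already in the paper; what your route buys is brevity and generality (any modulus satisfying $(\omega_2)$, any infinite-dimensional space), while the paper's route buys a concrete, self-contained example --- no retraction theorem needed --- of what a fixed-point-free log-nonexpansive map on a hyperconvex ball actually looks like.
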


\begin{proof} The proof is inspired by an example of S. Prus (cf. \cite[p.373]{KLS} see also \cite[Example 3.2]{Bar}). Let $\mathfrak{L}$ be a Banach limit on $\ell_\infty$ with $\|\mathfrak{L}\|\leq 1$. Next pick numbers $\sigma, \epsilon \in (0,1)$ such that $2\sigma\leq L \ln 10$,  $|\log r^\sigma|\geq 1$ for all $r\leq \epsilon$, $\epsilon^\sigma<\frac{1}{10}$  and $r|\log r^\sigma|<\epsilon$ for $r\in [0,\epsilon]$. Let $K=B_{\ell_\infty}(\epsilon)$ denote the closed ball in $\ell_\infty$ of center $0$ and radius $\epsilon$.  Note that for $x\in K$, $\vert\mathfrak{L}(x)\vert\leq\epsilon$. Now define $S\colon K\to K$ by 
\[
S(x)= \big(\epsilon - |\mathfrak{L}(x)| |\log|\mathfrak{L}(x)|^\sigma|,  |t_1||\log|t_1|^\sigma|, |t_2||\log|t_2|^\sigma|, \dots\big),
\]
for $x=(t_n)_{n=1}^\infty \in K$. Then $S$ is fixed point free and satisfies
\[
\|S(x) - S(y)\|_\infty\leq  \|x - y\|_\infty(L + |\log \|x - y\|_\infty|),\quad\forall x, y\in K.
\]
Indeed, if there were $x\in K$ so that $x=T(x)$ then we would have 
\[
t_1 = \epsilon - |\mathfrak{L}(x)||\log \vert\mathfrak{L}(x)\vert^\sigma,\quad t_{n+1} =  t_n |\log|t_n|^\sigma|\;\;\;\forall n\in\mathbb{N}.
\]
Thus $t_n\geq 0$ for all $n\in\mathbb{N}$. Moreover, $t_1>0$ and this implies that for all $n\in\mathbb{N}$, $t_{n+1}\geq t_n$. Let $a=\lim_{n\to\infty} t_n$. Since $a=a |\log|a|^\sigma| $ and $r|\log r^\sigma|<\epsilon$ for $r\in [0,\epsilon],$ one readily sees that 
$10^{-1}<\epsilon^\sigma$, which contradicts the initial election for $\epsilon$ and $\sigma$.  Let's prove now that $S$ is $L$-log-nonexpansive. Fix arbitrary $x=(t_n)_{n=1}^\infty$ and $y=(s_n)_{n=1}^\infty$ in $K$. In view of Example \ref{ex:4.1}, we have
\[
 \big| |\mathfrak{L}(x)||\log|\mathfrak{L}(x)|^\sigma| - |\mathfrak{L}(y)||\log|\mathfrak{L}(y)|^\sigma|\big|\leq  |\mathfrak{L}(x- y)|\big( L + |\log| \mathfrak{L}(x- y)|^\sigma|\big)
\]
and
\[
\big| |t_n| |\log |t_n|^\sigma| -  |s_n| |\log |s_n|^\sigma|\big|\leq |t_n - s_n|\big(L + |\log|t_n - s_n|^\sigma|\big),
\]
for all $n\in\mathbb{N}$. Then 
\[
\begin{split}
\| S(x) - S(y)\|_\infty &\leq \|x - y\|_\infty \big( L + \big|\log\|x - y\|_\infty^\sigma\big|\big)\\[1mm]
&\leq \|x - y\|_\infty \big(L + \big|\log\|x - y\|_\infty\big|\big),
\end{split}
\]
where we used that $\varphi(r)= r(\frac{2\sigma}{\ln 10} + |\log r^\sigma|)$ is nondecreasing on $[0,1]$ for all $\sigma\in (0,1)$. This proves the first statement. In order to prove the second one, it suffices to define $T\colon B_{\ell_\infty} \to B_{\ell_\infty}$ by $T(x)= (1/\epsilon) S(\epsilon x)$ and do $L=\delta$ in above calculations. The proof of proposition is complete. 
\end{proof}


\vskip .2cm 


An open question in metric fixed point theory asks whether or not the space $\co$ can be equivalently renormed to have the FPP for nonexpansive maps. As an application of James distortion result \cite{James1}, our next result particularly solves the corresponding problem for log-nonexpansive maps in every Banach space containing a subspace isomorphic to $\co$. Interestingly, this also shows that not every log-nonexpansive mapping is constant.

\vskip .2cm 

\begin{theorem}\label{thm:5.10} Let $X$ be a Banach space containing an isomorphic copy of $\co$. Assume that $\omega$ is subadditive on $[0,1]$, satisfies ($\omega_2$) and which is nondecreasing on $[0,\eta]$ where $\eta\in (0,1)$ is such that $\omega(\eta)\leq \eta$. Then there exists a closed convex subset of $B_X$ which fails the fixed point property for $\omega$-nonexpansive maps with null minimal displacement. 
\end{theorem}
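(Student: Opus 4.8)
The plan is to realize inside $X$ an almost-isometric copy of $\co$ and to take $T$ to be a mildly distorted affine right shift with a constant source, defined on the positive part of a \emph{sufficiently small} cube. The role of condition ($\omega_2$) is precisely to guarantee that, after the cube has been shrunk, this shift — although it is a genuine near-isometry — already qualifies as $\omega$-nonexpansive; this is what lets me keep the coordinate map linear and thereby control the minimal displacement.

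First I would invoke James' distortion theorem \cite{James1}: since $X$ contains an isomorphic copy of $\co$, for every $\theta>0$ there is a basic sequence $(u_n)_{n=1}^\infty$ in $X$ with $\sup_n|a_n|\le \|\sum_n a_n u_n\|\le (1+\theta)\sup_n|a_n|$ for all finitely supported scalars $(a_n)$; write $Y=\overline{\mathrm{span}}(u_n)$ and let $(u_n^*)$ be the associated coordinate functionals on $Y$. Using ($\omega_2$) I would fix $\theta>0$ and then choose $\rho\in(0,1)$ so small that $\omega(\delta)\ge (1+\theta)\delta$ for all $\delta\in(0,\rho]$ and, simultaneously, $(1+\theta)\rho\le \eta$; the first demand is met because $\omega(\delta)/\delta\to\infty$, and it is compatible with the second since both only ask $\rho$ to be small. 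Set $K=\{x=\sum_n t_n u_n\in Y:\ 0\le t_n\le \rho\ \text{for all }n\}$. As the intersection over $n$ of the closed sets $\{x:0\le u_n^*(x)\le\rho\}$, the set $K$ is closed and convex, and $\|x\|\le(1+\theta)\rho\le\eta<1$ shows $K\subset B_X$.

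The mapping I propose is $T(x)=\rho\,u_1+\sum_{n\ge 1} t_n u_{n+1}$ for $x=\sum_n t_n u_n$. Its coordinates are $\rho$ at position $1$ and the $t_n$ thereafter, all lying in $[0,\rho]$ and tending to $0$, so $T(K)\subset K$; note $T$ is visibly non-constant, in accordance with the remark preceding the statement. Writing $d:=\sup_n|t_n-s_n|$ for $x=\sum t_nu_n$, $y=\sum s_nu_n$, the upper James estimate gives $\|Tx-Ty\|=\|\sum_n (t_n-s_n)u_{n+1}\|\le (1+\theta)d$, while the lower one gives $d\le\|x-y\|$. Since $d\le\rho$ and $\|x-y\|\le(1+\theta)\rho\le\eta$, the choice of $\rho$ together with the monotonicity of $\omega$ on $[0,\eta]$ yields $\|Tx-Ty\|\le(1+\theta)d\le\omega(d)\le\omega(\|x-y\|)$, so $T$ is $\omega$-nonexpansive. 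Fixed-point freeness is immediate: $x=Tx$ forces $t_1=\rho$ and $t_{n+1}=t_n$, i.e. the constant sequence $(\rho)_n$, which does not lie in $\co$ and hence not in $K$. For $\mathrm{d}(T,K)=0$ I would test $T$ on the ramps $x^{(N)}=\sum_{n=1}^N \rho\bigl(1-\tfrac{n-1}{N}\bigr)u_n$: here every coordinate of $x^{(N)}-Tx^{(N)}$ equals $0$ or $-\rho/N$, so $\|x^{(N)}-Tx^{(N)}\|\le(1+\theta)\rho/N\to0$.

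The step I expect to be the real obstacle is exactly this last one, and it is what dictates the shape of the whole argument. The naive analogue of the constructions in Example \ref{ex:4.5} and Proposition \ref{prop:5.9} — a genuinely nonlinear coordinate shift $x\mapsto c\,u_1+\sum_n f(t_n)u_{n+1}$ with $f$ nonconstant and $\omega_f\le\omega$ (as produced by Proposition \ref{prop:2.5}) — is fixed-point free but has \emph{strictly positive} minimal displacement: condition ($\omega_2$) forces $f(t)/t\to\infty$, so a test point with $t_{n+1}\approx f(t_n)$ can never decrease to $0$, and steering it from the obligatory source value $c>0$ down to $0$ must cross a range on which $f(t)-t$ is bounded away from $0$, leaving an irreducible residual displacement. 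The device that dissolves this obstruction is to put \emph{no} nonlinearity into the coordinates and instead spend all of the ``$\omega$-slack'' on shrinking the domain, so that the near-isometric shift ($f=\mathrm{id}$) is itself $\omega$-nonexpansive; then the ramps incur only the harmless error $\rho/N$, while the usual $\co$-pathology — a would-be fixed point escaping to the constant sequence — still supplies both fixed-point freeness and a sequence of approximate fixed points.
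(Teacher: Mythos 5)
Your proof is correct, and it takes a genuinely different route from the paper's. The paper keeps the large cube $K=\big\{\sum_n t_n x_n : 0\le t_n\le\eta\big\}$ and puts all the work into the map: it invokes Proposition \ref{prop:2.5} (a McShane-extension argument — this is exactly where the subadditivity hypothesis enters) to produce a nonlinear $f$ with $\omega_f\le\omega$ on $[0,\eta]$ and $f(r)/r\to\infty$, and then uses the \emph{diagonal} map $T\big(\sum_n t_n x_n\big)=\sum_n \alpha_n f(\varepsilon t_n)x_n+\eta\sum_n\beta_n x_n$, with $\beta_n$ decreasing to $0$ and $\alpha_n=1-\beta_n$. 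Fixed-point freeness there comes from the blow-up of $f$ at $0$ (a fixed point would have to satisfy $t_n\ge\eta$ for large $n$, contradicting $t_n\to0$), and $\mathrm{d}(T,K)=0$ comes from solving $t=\alpha_i f(\varepsilon t)+\beta_i\eta$ on each coordinate interval via the intermediate value theorem and truncating. You do the opposite: you keep the coordinate action linear (an affine right shift with constant source) and spend ($\omega_2$) on shrinking the cube to side $\rho$, so that the near-isometry bound $\|Tx-Ty\|\le(1+\theta)d$ is absorbed into $\omega(d)\le\omega(\|x-y\|)$; fixed-point freeness is then the classical $\co$ shift pathology, and your ramp vectors give explicit approximate fixed points in place of the paper's one-dimensional fixed-point argument. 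Interestingly, the obstruction you flag in your closing paragraph (nonlinear coordinate shifts have positive displacement) is evaded by the paper in the opposite way from yours: you delete the nonlinearity, the paper deletes the shift. Your route is more elementary — no extension lemma, no IVT/Brouwer step — and in fact proves a formally stronger statement, since you never use subadditivity of $\omega$ nor the inequality $\omega(\eta)\le\eta$, only ($\omega_2$) and monotonicity near $0$. What the paper's construction offers in exchange is a witness map that is genuinely nonlinear and tied to $\omega$ coordinatewise (in the spirit of Example \ref{ex:4.5} and Proposition \ref{prop:5.9}), whereas yours is an affine near-isometry that is $\omega$-nonexpansive only because its domain is small; both are legitimate witnesses for the theorem as stated.
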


\begin{proof} Let $\varepsilon\in (0,1)$. By James's distortion result \cite[Lemma 2.2]{James1} there exists a basic sequence $(x_n)_{n=1}^\infty\subset B_X$ equivalent to the standard unit basis of $\co$ so that 
\begin{equation}\label{eqn:3}
\varepsilon \sup_n |a_n |\leq \Bigg\|\sum_{n=1}^\infty a_n x_n\Bigg\|\leq \sup_n|a_n|, 
\end{equation}
for all $(a_n)_{n=1}^\infty \in \co$. Let $\beta_n\to 0$ be a decreasing sequence in $(0,1)$ and set $\alpha_n= 1 - \beta_n$. Now consider the set $K=\big\{ \sum_{n=1}^\infty t_n x_n \colon 0\leq t_n \leq \eta\big\}$. By Proposition \ref{prop:2.5} there exists a nonzero function $f\colon [0,1]\to\mathbb{R}_+$ such that $\omega_f \leq \omega$ on $[0,\eta]$ and $\lim_{r\to 0^+} r^{-1} f(r)=+\infty$. Let us define a mapping $T\colon K\to K$ by
\[
T\Bigg( \sum_{n=1}^\infty t_n x_n\Bigg) = \sum_{n=1}^\infty \alpha_n f(\varepsilon t_n) x_n + \eta\sum_{n=1}^\infty \beta_n x_n.
\]
It is easy to see that 
\[
\alpha_n f(\varepsilon t_n) + \eta\beta_n \leq \alpha_n \omega(\varepsilon t_n)+\eta \beta_n\leq \eta,
\]
for all $n\in\mathbb{N}$, showing that $T$ is well-defined. Let us prove now that $T$ is fixed-point free. If there holds that $T(x)=x$ for some $x=\sum_{n=1}^\infty t_n x_n\in K$ then, for every $n\in\mathbb{N}$, we have
\[
t_n = \alpha_n f(\varepsilon t_n)  + \eta\beta_n.
\]
It turns out that as $t_n\to 0$ we can find an integer $n_0=n_0(\varepsilon, f)\in \mathbb{N}$ so that
\[
f(\varepsilon t_n) \geq t_n\quad \forall n\geq n_0.
\]

\noindent Consequently, $t_n\geq \alpha_n t_n  + \eta\beta_n$ and hence $t_n\geq \eta$ for sufficiently large $n$. But this contradicts the fact that $t_n\to 0$ as $n$ goes to infinity. Finally, we prove that $T$ is $\omega$-nonexpansive. Let $x=\sum_{n=1}^\infty t_n x_n,\, y=\sum_{n=1}^\infty s_n x_n\in K$ and set $a_n = t_n -s_n$ ($n\in\mathbb{N}$). Then by (\ref{eqn:3}) we get
\[
\begin{split}
\|T(x) - T(y)\|&=\Bigg\| \sum_{n=1}^\infty \alpha_n\big( f(\varepsilon t_n) - f(\varepsilon s_n)\big)x_n \Bigg\|\\[1.2mm]
&\leq\sup_{n\in\mathbb{N}} \alpha_n \big| f(\varepsilon t_n) - f(\varepsilon s_n)\big|\\[1.2mm]
&\leq  \sup_{n\in\mathbb{N}} \omega_f( \varepsilon |t_n - s_n|)\leq \omega(\|x - y\|).
\end{split}
\]
It remains to show that $\mathrm{d}(T,K)=0$. To check this, for $i\in\mathbb{N}$, define the function $g_i\colon [\beta_i\eta, \eta]\to [\beta_i \eta, \eta]$ by $g_i(t)= \alpha_i f(\varepsilon t) + \beta_i \eta$. Notice $g_i$ is well-defined and is continuous. By Brower's fixed point theorem there is $t_i \in [\beta_i \eta, \eta]$ so that $g_i(t_i)=t_i$. Define a sequence $(y_n)\subset K$ by putting $y_n:=\sum_{i=1}^n t_i x_i$. Now observe that
\[
T(y_n) = \sum_{i=1}^n g_i(t_i) x_i + \eta \sum_{i=n+1}^\infty \beta_i x_i,
\]
and hence $\|T(y_n) - y_n\|\to 0$ as $n\to\infty$. 
\end{proof}

\vskip .2cm 


\begin{cor}
Let $\omega$ be as in previous theorem. Then $(\co,\|\cdot\|_\infty)$ cannot be equivalent renormed to have the FPP for $\omega$-nonexpansive maps with null minimal displacement. 
\end{cor}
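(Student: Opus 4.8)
The plan is to apply Theorem \ref{thm:5.10} directly; the only point requiring verification is that every equivalent renorming of $\co$ remains a space to which that theorem applies. First I would fix an arbitrary norm $\|\cdot\|'$ on $\co$ equivalent to the sup-norm $\|\cdot\|_\infty$, and consider the Banach space $X=(\co,\|\cdot\|')$. The identity map $\mathrm{id}\colon (\co,\|\cdot\|_\infty)\to X$ is a linear bijection which, by equivalence of the two norms, is bounded with bounded inverse; hence it is an isomorphism. In particular $X$ contains an isomorphic copy of $\co$, namely $X$ itself.

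Having established this, I would invoke Theorem \ref{thm:5.10} with this $X$ and the given modulus $\omega$, which by hypothesis is subadditive on $[0,1]$, satisfies ($\omega_2$), and is nondecreasing on $[0,\eta]$ with $\omega(\eta)\leq\eta$. The theorem then produces a closed convex subset $K$ of $B_X$ together with an $\omega$-nonexpansive, fixed-point-free map on $K$ with null minimal displacement. Since this witnessing subset lies inside $X$, the renormed space $(\co,\|\cdot\|')$ itself fails the FPP for $\omega$-nonexpansive maps with null minimal displacement.

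As the norm $\|\cdot\|'$ was an arbitrary equivalent renorming, it follows that no equivalent renorming of $\co$ can possess the FPP for $\omega$-nonexpansive maps with null minimal displacement, which is precisely the assertion of the corollary. There is essentially no obstacle here: the entire content is carried by Theorem \ref{thm:5.10}, and the only thing to observe is the stability of the hypothesis ``$X$ contains an isomorphic copy of $\co$'' under passage to an equivalent norm.
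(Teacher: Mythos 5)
Your proposal is correct and is exactly the argument the paper intends: the corollary is stated without proof as an immediate consequence of Theorem \ref{thm:5.10}, since the hypothesis ``$X$ contains an isomorphic copy of $\co$'' is invariant under equivalent renorming. Your explicit verification of that invariance and the application of the theorem to $X=(\co,\|\cdot\|')$ is precisely what is needed, and nothing more.
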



\vskip .2cm 


\begin{remark} Lin \cite{Lin1, Lin2} displayed for the first time examples of non-reflexive spaces having the FPP for nonexpansive maps. In fact, he showed that $\ell_1$ admits an equivalent norm which enjoys the FPP. We do not know for which non-trivial moduli $\omega$ his renorming also has the FPP for $1$-$\omega$-nonexpansive maps. 
\end{remark}



Let us proceed with the proof of our last result. First, recall that a metric space $(M,\rho)$ is called a {\it convex metric space} if $(x;y)$ is nonempty for all $x, y\in M$, where
\[
(x;y)=\{ z\in K\colon z\neq x,\, z\neq y,\, \rho(z,x) + \rho(z,y) = \rho(x,y)\}.
\]
Clearly, every convex subset of a Banach space is an example of a convex metric space. Given a mapping $T\colon M\to M$, recall the lower derivative of $T$ at $x$ in the direction of $y$ (cf. \cite[p.8]{Clarke}) is defined as follows:
\[
\underline{D}T(x, y)=\left\{
\begin{aligned}
&\;0\; & \text{ if } & y=x,\\
&\liminf_{z\to x,\, z\in (x;y)}\frac{\rho(Tz, Tx)}{\rho(z,x)}\; & \text{ if } & y\neq x. 
\end{aligned}
\right.
\]
The above liminf is formally defined as follows: for each $\varepsilon>0$, take the infimum of $\rho(Tz, Tx)/\rho(z,x)$ over all $z\in (x;y)$ such that $\rho(z,x)<\varepsilon$ (this is $+\infty$ if no such $z$ exists). Then the limit of these infima when $\varepsilon\to 0$ is $\underline{D}T(x,y)$. 


\smallskip 


In the next lemma we shall use the notation
\[
\Delta_\omega := \limsup_{\delta\to 0}\frac{\omega(\delta)}{\delta}. 
\]


\begin{lemma}\label{lem:5.13} Let $K$ be a closed bounded convex subset of a Banach space $X$ and $T\colon K\to K$ a $L$-$\omega$-nonexpansive mapping with $L\in [0,1]$. The following assertions hold.
\begin{itemize}
\item[(i)]  If $L\in [0,1)$ and $\Delta_\omega< 1- L$ then $T$ has a fixed point. 
\item[(ii)] If $\omega$ satisfies ($\omega_4)$, i.e., $\Delta_\omega \leq 1- L$, then $\mathrm{d}(T,K)=0$. 
\item[(iii)] If $\omega$ satisfies ($\omega_3$)-($\omega_4$) then $T$ is nonexpansive. 
\end{itemize}
\end{lemma}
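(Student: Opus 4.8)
For part (i), the plan is to verify the pointwise directional-contraction hypothesis of the fixed point theorem in \cite{Clarke} and then invoke it. Since $\omega$ is a modulus of continuity, $T$ is (uniformly) continuous on $K$, and $K$ is closed, bounded and convex, so the structural hypotheses are in place; it remains to bound the lower derivative. Fix $x,y\in K$ with $y\neq x$. For $z\in(x;y)$ the defining inequality gives $\rho(Tz,Tx)/\rho(z,x)\leq L+\omega(\rho(z,x))/\rho(z,x)$, and as $z\to x$ along $(x;y)$ the quantity $\rho(z,x)$ sweeps out all small positive values; taking the $\liminf$ and using that a $\liminf$ never exceeds the corresponding $\limsup$ yields
\[
\underline{D}T(x,y)\leq L+\liminf_{\delta\to 0^+}\frac{\omega(\delta)}{\delta}\leq L+\Delta_\omega<1,
\]
the last inequality being exactly the hypothesis $\Delta_\omega<1-L$. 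Thus $\underline{D}T(x,y)<1$ for every $x\neq y$, which is the contraction condition required by \cite{Clarke}, and a fixed point follows.

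For part (ii), the idea is to perturb $T$ into the strict regime of (i) and then pass to a limit. Fix any $z\in K$ and for $\lambda\in(0,1)$ set $T_\lambda(x)=(1-\lambda)z+\lambda Tx$; convexity of $K$ makes $T_\lambda$ a self-map of $K$. A one-line estimate shows $T_\lambda$ is $\tilde L$-$\tilde\omega$-nonexpansive with $\tilde L=\lambda L$ and $\tilde\omega=\lambda\omega$, whence $\tilde L\in[0,1)$ and $\Delta_{\tilde\omega}=\lambda\Delta_\omega\leq\lambda(1-L)<1-\lambda L=1-\tilde L$, the strict inequality holding precisely because $\lambda<1$. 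Part (i) then supplies a fixed point $x_\lambda$ of $T_\lambda$, and from $x_\lambda=(1-\lambda)z+\lambda Tx_\lambda$ one gets $\|x_\lambda-Tx_\lambda\|=(1-\lambda)\|z-Tx_\lambda\|\leq(1-\lambda)\diam K$. Letting $\lambda\to 1^-$ forces $\mathrm{d}(T,K)=0$.

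For part (iii), the plan is first to control $\omega$ near $0$ and then to chain along segments. Since $\delta\mapsto\omega(\delta)/\delta$ is nonincreasing on $(0,1]$ by ($\omega_3$), it is bounded above by its limit at $0^+$, which equals $\Delta_\omega$; hence ($\omega_4$) gives $\omega(\delta)\leq(1-L)\delta$ for all $\delta\in(0,1]$, so $\rho(Tu,Tv)\leq L\rho(u,v)+\omega(\rho(u,v))\leq\rho(u,v)$ whenever $\rho(u,v)\leq 1$. To discard the restriction $\rho(u,v)\leq 1$, for arbitrary $x,y\in K$ I partition the segment $[x,y]\subset K$ into $N$ equal subintervals with endpoints $z_0=x,\dots,z_N=y$, choosing $N$ so that each step has length at most $1$; the triangle inequality together with the collinearity identity $\sum_k\rho(z_{k-1},z_k)=\rho(x,y)$ gives $\rho(Tx,Ty)\leq\sum_k\rho(Tz_{k-1},Tz_k)\leq\sum_k\rho(z_{k-1},z_k)=\rho(x,y)$, so $T$ is nonexpansive.

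The modulus bookkeeping is routine; the points needing care are matching the $\liminf$ in the definition of $\underline{D}T$ with the $\limsup$ $\Delta_\omega$ in (i), which is the computation everything rests on, and noting in (iii) that ($\omega_3$) only controls $(0,1]$, so the segment-chaining step is genuinely needed to reach pairs at distance exceeding $1$. The structurally pleasant feature is that (ii) need not be proved from scratch: the averaging coefficient $\lambda<1$ is exactly what converts the non-strict hypothesis $\Delta_\omega\leq 1-L$ into the strict one of (i).
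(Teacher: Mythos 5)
Your proposal is correct, and for parts (i) and (iii) it follows essentially the paper's own route: in (i) you verify Clarke's directional-derivative condition with the same uniform bound $L+\Delta_\omega<1$ (you merely make explicit the point, implicit in the paper, that the $\liminf$ defining $\underline{D}T$ may be tested along points of the linear segment at prescribed small distances from $x$, which is legitimate because $K$ is convex); in (iii), where the paper extracts a sequence $\delta_n\to 0$ with $\sup_{0<\delta\le\delta_n}\omega(\delta)/\delta\to\Delta_\omega$ and passes to the limit, you observe directly that a ratio nonincreasing on $(0,1]$ is bounded by its limit $\Delta_\omega$ at $0^+$, giving $\omega(\delta)\le(1-L)\delta$ at once, and both proofs then chain along the segment to treat $\|x-y\|>1$. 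The genuine divergence is in (ii): the paper perturbs the \emph{domain}, translating so that $0\in K$ and setting $F_\mu(x)=T(\mu x)$, which obliges it to prove the error estimate $\mathrm{d}(T,K)\le \mathrm{d}(F_\mu,K)+(1-\mu)L\,\diam K+\sup_{0\le r\le \diam K}\omega((1-\mu)r)$ and to invoke continuity of $\omega$ at $0$ to make the right-hand side small; you instead perturb the \emph{range}, $T_\lambda(x)=(1-\lambda)z+\lambda Tx$, so that the fixed-point identity itself yields $\|x_\lambda-Tx_\lambda\|=(1-\lambda)\|z-Tx_\lambda\|\le (1-\lambda)\diam K$ with no modulus bookkeeping whatsoever. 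Both reductions exploit the same mechanism --- the factor $\mu$ (resp.\ $\lambda$) strictly below $1$ converts the non-strict hypothesis $\Delta_\omega\le 1-L$ into the strict hypothesis of (i), with $\lambda\Delta_\omega\le\lambda(1-L)<1-\lambda L$ playing the role of the paper's $\mu(1-L)<1-\mu L$ --- but your averaging argument is shorter, needs no translation of $K$, and dispenses with the auxiliary estimate, so it is arguably the cleaner of the two.
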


\begin{proof} (i) This is a direct consequence of \cite[Theorem 1]{Clarke}. Indeed, by assumption one readily has that $\underline{D}T(u,Tu)\leq L+\Delta_\omega<1$ for all $u\in K$. By the mentioned result, $T$ has a fixed point. 

\smallskip 

(ii) By assumption, 
\[
\|Tx - Ty\|\leq L\|x - y\| + \omega(\|x -y\|)\quad\text{for all } x,y\in K.
\]
There is no loss of generality in assuming that $0\in K$. For $\mu\in (0,1)$ define $F_\mu \colon K\to K$ by $F_\mu(x) = T(\mu x)$. Then we have
\begin{equation}\label{eqn:3.6}
\mathrm{d}(T,K)\leq \mathrm{d}(F_\mu, K) + (1-\mu)L\diam K +\sup_{0\leq r\leq \diam K}\omega((1-\mu)r).
\end{equation}
We claim that $\mathrm{d}(F_\mu, K)=0$ for all $\mu\in (0,1)$. Indeed, first notice that
\[
\begin{split}
\|F_\mu z  - F_\mu x\| \leq \mu L\|z - x\| + \omega(\mu \|z- x\|)
\end{split}
\]
for all $z\neq x$ in $K$. Defining $\omega_\mu(\delta)= \omega(\mu \delta)$ we clearly have that
\[
\limsup_{\delta\to 0}\frac{\omega_\mu(\delta)}{\delta}\leq \mu\limsup_{\delta\to 0} \frac{\omega(\delta)}{\delta}\leq\mu(1-L)< 1- \mu L.
\]
Therefore $F_\mu$ is $\mu L$-$\omega_\mu$-nonexpansive with $\mu L\in [0,1)$. By the already proved assertion (i) $F_\mu$ has a fixed point, proving the claim. Combining this with (\ref{eqn:3.6}) one readily has $\mathrm{d}(T,K)=0$, as desired. 

\smallskip 

Let's prove (iii). Fix $x, y\in K$ with $\|x - y\|>0$. We first assume that $\|x - y\|\leq 1$. Pick a decreasing sequence $\delta_n\to 0$ in $(0,1)$ so that $\sup_{0<\delta\leq \delta_n}\omega(\delta)/\delta\to \Delta_\omega\leq 1-L$. Then
\[
\begin{split}
\|Tx - Ty\|& \leq  L  \| x - y\| + \frac{w(\| x - y\|)}{\| x - y\|}\cdot \| x -y \|\\[1mm]
&\leq L\| x - y\| + \frac{\omega(\delta_n \| x - y\|)}{\delta_n \| x-y \|}\cdot \|x - y\|\\[1mm]
&\leq L \| x - y\|+ \Bigg( \sup_{0< \delta\leq \delta_n \| x- y\|}\frac{\omega(\delta)}{\delta}\Bigg)\cdot \| x - y\|\\[1mm]
&\leq L\| x - y\| + \Bigg( \sup_{0< \delta\leq \delta_n }\frac{\omega(\delta)}{\delta}\Bigg)\cdot \| x - y\|,
\end{split} 
\]
where in the second inequality we used that $\omega$ satisfies ($\omega_3$) and that $\delta_n\|x - y\|\leq \|x - y\|\leq 1$ for all $n\in\mathbb{N}$. Hence, taking the limit on $n$ and using ($\omega_4)$ yields $\|Tx - Ty\|\leq \| x- y\|$. Now, for $\| x- y\|>1$, we can choose points $x_0, x_1, \dots, x_m$ in $K$ such that $x_0= x$, $x_m= y$, $\| x_i - x_{i-1}\|< 1$ for all $i$, and $\sum_{i=1}^m \|x_i - x_{i-1}\|= \| x-y\|$. This combined with the previous computation shows $\|Tx - Ty\|\leq \| x- y\|$, and hence $T$ is nonexpansive.
\end{proof}


\medskip 

The notion of normal structure introduced by Brodskii and Milman \cite{BM} was used by Kirk \cite{Ki1} to show that weakly compact convex sets with normal structure have FPP for nonexpansive maps. For a subset $S$ of a Banach space $X$, recall that a point $x\in S$ is called a {\it diametral point} of $S$ provided 
\[
\sup_{y\in S}\|x- y\|=\diam S. 
\]
A nonempty convex set $C\subset X$ is said to have {\it normal structure} if for each bounded convex subset $K$ of $C$ which contains more than one point, there is some point $x\in K$ which is not a diametral point of $K$. The following proposition summarizes Kirk's approach. 


\smallskip 


\begin{proposition}\label{prop:4.16} Let $X$ be a Banach space and $C$ a weakly compact convex subset of $X$. Assume that $T\colon C\to C$ is a continuous mapping such that, whenever $K$ is a minimal $T$-invariant closed convex subset of $C$ then 
\begin{equation}\label{eqn:2.5}
\sup_{x\in K}\|x - y\|=\diam K\quad \forall y\in K.
\end{equation}
Then either $T$ has a fixed point or $C$ fails to have normal structure. 
\end{proposition}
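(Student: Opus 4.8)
The plan is to follow Kirk's classical strategy for nonexpansive maps and argue by contradiction: assume that $C$ \emph{does} have normal structure and that $T$ has no fixed point, and derive a contradiction from the diametral condition (\ref{eqn:2.5}). The first step is to invoke Zorn's Lemma to produce a minimal nonempty $T$-invariant closed convex subset $K \subseteq C$. This is where weak compactness of $C$ enters: the family of nonempty closed convex $T$-invariant subsets of $C$ is partially ordered by reverse inclusion, and any chain has nonempty intersection because weakly compact sets have the finite intersection property for weakly closed (hence norm-closed convex) sets. Thus a minimal element $K$ exists. I would also note that $K$ must contain more than one point, since a singleton $T$-invariant set would be a fixed point, contrary to assumption.

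The second step is the heart of the argument. By hypothesis (\ref{eqn:2.5}), \emph{every} point of the minimal set $K$ is a diametral point: $\sup_{x\in K}\|x-y\| = \diam K$ for all $y \in K$. On the other hand, normal structure of $C$ applied to the bounded convex set $K$ (which has more than one point) furnishes a point $x_0 \in K$ that is \emph{not} diametral, i.e. $\sup_{y\in K}\|x_0 - y\| < \diam K$. These two facts are in direct contradiction, provided $\diam K > 0$, which holds since $K$ is not a singleton. This completes the contradiction and hence the proposition: either $T$ has a fixed point, or $C$ fails normal structure.

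The main obstacle—and the step deserving the most care—is verifying the existence of the minimal $T$-invariant closed convex set via Zorn's Lemma, specifically checking that the intersection of a descending chain remains both nonempty and $T$-invariant. Nonemptiness requires the finite intersection property guaranteed by weak compactness together with the fact that norm-closed convex sets are weakly closed (by Mazur's theorem); $T$-invariance and convexity of the intersection are routine. I should also confirm that continuity of $T$ is what allows minimality to force the structure of $K$ (in the standard treatment one passes to $\overline{\operatorname{conv}}\,T(K)$, which is $T$-invariant, closed, convex, and contained in $K$, hence equals $K$ by minimality). Once the minimal set is in hand, the clash between the diametral hypothesis and the nondiametral point supplied by normal structure is immediate, so the remainder is essentially bookkeeping.
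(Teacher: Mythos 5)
Your proposal is correct and follows essentially the same route as the paper's proof: assume normal structure and no fixed point, use Zorn's lemma (with weak compactness ensuring chains have nonempty intersection) to extract a minimal $T$-invariant closed convex set $K$ with $\diam K>0$, and then observe that hypothesis (\ref{eqn:2.5}) makes every point of $K$ diametral, contradicting the nondiametral point furnished by normal structure. Your write-up is simply more detailed than the paper's, which leaves the Zorn's lemma and finite-intersection-property bookkeeping implicit.
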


\begin{proof} Let's suppose that $T$ is fixed-point free and, towards a contradiction, assume that $C$ has the normal structure. By Zorn's lemma we can find a minimal $T$-invariant closed convex subset $K$ of $C$. Notice here that the use of weak compactness of $C$ is crucial. Now since $T$ is fixed-point free, $\diam K>0$. But this and (\ref{eqn:2.5}) contradict the normal structure of $C$. 
\end{proof}


\vskip .1cm 


\begin{definition} Let $X$ be a Banach space and $C$ a bounded closed convex subset of $X$. A continuous mapping $T\colon C\to C$ is called {\it admissible} if $T$ fulfills the property described in (\ref{eqn:2.5}). 
\end{definition}

\medskip 

Therefore, Kirk's result can be rephrased as saying that every weakly compact convex set with normal structure in a Banach space has the FPP for continuous admissibe maps. It is important to recall the well-known fact that nonexpansive maps are admissible. Therefore, in view of Lemma \ref{lem:5.13}, $L$-$\omega$-nonexpansive maps satisfying ($\omega_3$)-($\omega_4$) are naturally admissible. 

\smallskip 

We conclude the paper with two additional examples. The first one highlights a $L$-$\omega$-nonexpansive mapping on the unit ball of any Banach space $X$ which is not nonexpansive and has a fixed point. Moreover, $\omega$ is concave and satisfies ($\omega_4$). The second is somehow connected with Lemma \ref{lem:5.13}.

\vskip .1cm 

\begin{ex} Define the function $\varphi(r)=\lambda r \sqrt{1-r}$ for $r\in [0,1]$ and $\lambda\in (0,1)$. An easy calculation shows that $\varphi'(2/3)=0$, $\varphi$ is increasing on $[0, 2/3]$, decreasing on $[2/3,1)$ and is concave. In particular, $\varphi$ is subadditive on $[0,2/3]$. So, for every $r< s$ in $[0,2/3]$ letting $\delta=s-r$, we have
\[
|\varphi(r) - \varphi(s)|=\varphi(s) -\varphi(r) = \varphi(r+ \delta) - \varphi(r)\leq \varphi(\delta)=\lambda\delta \sqrt{1 - \delta}. 
\]
It follows therefore that $\omega_{f_0}(\delta)\leq \varphi(\delta)$ for all $\delta\in [0,2/3]$, where $f_0=\varphi|_{[0,2/3]}$. Let us define an upwards majorant modulus $\tilde{\omega}\colon \mathbb{R}_+\to \mathbb{R}_+$ by 
\[
\tilde{\omega}(\delta)= \frac{\lambda}{2\sqrt{3}}\delta + \frac{3\lambda}{2}\delta \sqrt{|1-\delta|}.
\]
Clearly, $\omega_{f_0}\leq \tilde{\omega}$ on $[0,2/3]$. Thus by McSchane's extension theorem \cite[Theorem 2]{McS} $f_0$ admits an extension $f\colon [0,1]\to [0,\|\varphi\|_{L_\infty}]$ with $\omega_f\leq \tilde{\omega}$ on $\mathbb{R}_+$. Assume now $X$ is a Banach space and define $T\colon B_X\to B_X$ by
\[
T(x)= f(\|x\|)\cdot x.
\]
Note that $\|f\|_{L_\infty}\leq \|\varphi\|_{L_\infty}=\varphi(2/3)$. Then, for all $x, y\in B_X$, we have 
\[
\begin{split}
\|T(x)- T(y)\|
&\leq \|\varphi\|_{L_\infty}\|x - y\| + \omega_f(\|x - y\|)\\[1mm]
&\leq \frac{2\lambda}{3\sqrt{3}}\|x- y\| + \tilde{\omega}(\|x- y\|)=L\|x - y\| + \omega(\|x - y\|),
\end{split}
\]
where $L=\dfrac{7\sqrt{3}\lambda}{18}$ and $\omega(\delta)= \dfrac{3\lambda}{2}\delta \sqrt{|1- \delta|}$. Finally, note that both $f'(t)\to\infty$ and $\omega'(t)\to \infty$ as $t\to 1$, and
\[
\lim_{\delta\to 0} \frac{\omega(\delta)}{\delta}= \frac{3\lambda}{2}\leq  1 - \frac{7\sqrt{3}\lambda}{18},\quad \forall \lambda\leq \frac{18}{27 + 7\sqrt{3}}.
\]
\end{ex}

\vskip .2cm 

\begin{ex} Let $f\colon [0,1]\to [0,1]$ be any continuous function. Now, consider the space $X=\co$ equipped with its standard sup-norm. Define $T\colon B_{\co}\to B_{\co}$ by
\[
T(x)=(1 - f(\|x\|_\infty)\|x\|_\infty, t_1, t_2,\dots),\quad x=(t_i)_{i=1}^\infty\in B_{\co}.
\]
It is clear that $T(B_{\co})\subseteq B_{\co}$. We are going now to show that $T$ is a $1$-$\omega_f$-nonexpansive mapping with $\digamma(T)=\emptyset$. Firstly, suppose on the contrary that $x=Tx$ for some $x\in B_{\co}$. Then
\[
t_1  = 1 - f(\|x\|_\infty)\|x\|_\infty\quad\text{and}\quad t_{n+1}= t_n\,\forall n\geq 1. 
\]
Notice that the second set of equalities implies that $x=0$. However, this contradicts the first equality. Now, for any $x=(t_i)_{i=1}^\infty$ and $y=(s_i)_{i=1}^\infty$ in $B_{\co}$ we have
\[
\|T(x) - T(y)\|_\infty =\sup_{n\in\mathbb{N}}\Big\{ \big| f(\|y\|_\infty)\|y\|_\infty - f(\|x\|_\infty)\|x\|_\infty\big|, |t_n -s_n|\Big\}.
\]
On the other hand, observe that 
\[
\big|f(\|y\|_\infty)\|y\|_\infty - f(\|x\|_\infty)\|x\|_\infty\big|\leq \big| f(\|y\|_\infty) - f(\|x\|_\infty)\big| \|y\|_\infty + f(\|x\|_\infty)\|x- y\|_\infty.
\]
It follows therefore that
\[
\begin{split}
\|T(x) - T(y)\|_\infty &\leq \max\Big\{ \|f\|_{L_\infty}\|x - y\|_\infty + \omega_f(\|x - y\|_\infty)\,,\, \|x- y\|_\infty\Big\}\\[1mm]
&\leq \|x - y\| + \omega_f(\|x - y\|_\infty).
\end{split}
\]
Let $\Delta_f: = \displaystyle\lim_{\delta\to 0} \dfrac{\omega_f(\delta)}{\delta}$. We finally observe the following properties: 

\vskip .2cm 
\begin{itemize}
\item[(i)] If $f$ is constant then $\omega_f\equiv 0$ and hence $T$ is nonexpansive.
\item[(ii)] If $f(r)= \sin(r/2)$ then $\Delta_f=\frac{1}{2}>0=1-1.$
\item[(iii)] If $f(r)= \sin(r^2)$ then $\Delta_f = 0=1-1$. In this case, notice that Lemma \ref{lem:5.13}-(ii) implies $\mathrm{d}(T,B_{\co})=0$. In addition, for $L\in (0,1]$ and $\omega(\delta)= (1-L)\delta + \omega_f(\delta)$ note that $T$ is $L$-$\omega$-nonexpansive with $\omega'(0)=1-L$. 
\end{itemize}
The above limits follow directly from the fact that $\omega_f(\delta)\approx f(\delta)$ for $\delta\approx 0$ (cf. \cite[Theorem 2.1, remarks of Theorem 2.2 and Remark in p. 200]{Gal}). 
\end{ex}


\section{Conclusion}\label{sec12}

The paper introduces the class of $L$-$\omega$-nonexpansive maps and starts the study of the FPP and AFPP for them. Several examples and results are given. Regarding Lemma \ref{lem:5.13}, it should be mentioned that in the past there has been significant progress concerning $\omega$-contractions maps, i.e., $0$-$\omega$-nonexpansive maps in our terminology. In the fundamental paper \cite{BW}, Boyd and Wong proved that if $\omega_T(\delta)\leq \omega(\delta)$ with $\omega(\delta)< \delta$ for $\delta>0$, then $T$ has a unique fixed point $x_0$, and $T^n x\to x_0$ for each $x\in K$. For more on nonlinear contractions, see \cite[Chapter 1]{KiSims}. The lemma also shows that the conditions ($\omega_3$)-($\omega_4$) imply $T$ is nonexpansive. So, we believe that the next step in the study of the FPP for $L$-$\omega$-nonexpansive maps would be in the direction of establishing the FPP in the absence of ($\omega_3$). 

\subsection*{Acknowledgements}
This work was initiated when the first author was attending the VIII Symposium on Nonlinear Analysis held in June 2024, in Toru\'n, Poland. He would like to take this opportunity to thank the organizers for the invitation and very nice scientific sphere. The authors would like to express their gratitude to the anonymous reviewer for valuable comments that improved the presentation and quality of the paper.

\section*{Data Availibility} The manuscript has no associated data. 

\subsection*{Conflit of Interest}

\noindent The authors declare that there are no conflit of interest regarding the publication of this paper.  


\medskip
\noindent


\bibliographystyle{alpha}

\end{document}